\newcommand{\Rmnum}[1]{\expandafter\@slowromancap\romannumeral #1@}
\def\Xint#1{\mathchoice
	{\XXint\displaystyle\textstyle{#1}}%
	{\XXint\textstyle\scriptstyle{#1}}%
	{\XXint\scriptstyle\scriptscriptstyle{#1}}%
	{\XXint\scriptscriptstyle\scriptscriptstyle{#1}}%
	\!\int}
\def\XXint#1#2#3{{\setbox0=\hbox{$#1{#2#3}{\int}$}
		\vcenter{\hbox{$#2#3$}}\kern-.5\wd0}}
\def\dashint{\Xint-}
\begin{document}
\title[\hfil  Asymptotic mean value properties\dots] {Asymptotic mean value properties for the elliptic and parabolic double phase equations}

\author[W. Meng and C. Zhang \hfil \hfilneg]
{Weili Meng and Chao Zhang$^*$}
	
\thanks{$^*$Corresponding author.}
	
\address{Weili Meng \hfill\break
School of Mathematics, Harbin Institute of Technology, Harbin 150001, P.R. China} \email{1190500121@stu.hit.edu.cn}
	
\address{Chao Zhang\hfill\break
School of Mathematics, Harbin Institute of Technology,	Harbin 150001, P.R. China} \email{czhangmath@hit.edu.cn}

\date{}
\subjclass[2020]{35B05, 35D40, 35J92, 35K92}
\keywords{Mean value property; Viscosity solutions; Elliptic and parabolic double phase equations}
	
\begin{abstract} We characterize an asymptotic mean value formula in the viscosity sense for the double phase elliptic equation
$$
-\text{\rm{div}}(\lvert \nabla u \rvert^{p-2}\nabla u+ a(x)\lvert\nabla u \rvert^{q-2}\nabla u)=0 
$$
and the normalized double phase parabolic equation
$$
u_t=\lvert\nabla u \rvert ^{2-p}\text{\rm{div}}(\lvert \nabla u \rvert^{p-2}\nabla u+ a(x,t)\lvert\nabla u \rvert^{q-2}\nabla u), \quad 1<p\leq q<\infty.
$$
This is the first mean value result for such kind of nonuniformly elliptic and parabolic equations. In addition, the results obtained can also be applied to the $p(x)$-Laplace equations and the variable coefficient $p$-Laplace type equations.
\end{abstract}

\maketitle \numberwithin{equation}{section}
\newtheorem{theorem}{Theorem}[section]
\newtheorem{lemma}[theorem]{Lemma}
\newtheorem{definition}[theorem]{Definition}
\newtheorem{claim}[theorem]{Claim}
\newtheorem{proposition}[theorem]{Proposition}
\newtheorem{remark}[theorem]{Remark}
\newtheorem{corollary}[theorem]{Corollary}
\newtheorem{example}[theorem]{Example}
\allowdisplaybreaks

\section{Introduction}\label{sec1}

Let $\Omega$ be a bounded domain in $\mathbb{R}^N (N\geq 2)$.  We consider the following double phase elliptic equation
\begin{align}\label{1-1}
-\text{\rm{div}}(\lvert \nabla u \rvert^{p-2}\nabla u+ a(x)\lvert\nabla u \rvert^{q-2}\nabla u)=0 \quad \text{in } \Omega,
\end{align}
where $1<p\leq q<\infty$ and   $a(x)\geq 0$. It is the Euler-Lagrange equation of the non-autonomous functional
$$
W^{1,1}(\Omega)\ni  w\mapsto \int_{\Omega}\left(\frac{1}{p}\lvert\nabla w\rvert^p+\frac{a(x)}{q}\lvert\nabla w\rvert^q\right)dx.
$$
Originally, this functional is connected to the Homogenization theory and Lavrentiev phenomenon \cite{JKO, Mar91, Z}, which reflects the behavior of strongly anisotropic materials, where the coefficient $a(\cdot)$ is used to regulate two mixtures with $p$ and $q$ hardening, respectively. 
 
During the last years, problems  of the type considered in \eqref{1-1}  have received great attention from the variational point of view. The regularity of minimizers and weak solutions is determined via a delicate interaction between the growth conditions and the pointwise behaviour of $a(\cdot)$.  Starting from a series of remarkable works of Colombo and Mingione  et. al. \cite{BCM18,CM,CM22015}, despite its relatively short history, double phase problems has already achieved an elaborate theory with several connections to other branches. We refer the readers to \cite{BBO20, BDMS, BO17, CS16, CM16, DeFM, DeFM21, DeFM22, DeFP19, FRZZ22, FZ210, PPR22} and the references therein.

It is well-known that a continuous function $u$ is harmonic if and only if it obeys the mean value formula discovered by Gauss. That is, $u$ solves the Laplace equation $\Delta u=0$ in $\Omega$ if and only if 
$$
u(x)=\dfrac{1}{|B_{\varepsilon}(x)|}\int_{B_{\varepsilon}(x)}u(y)dy=\dashint_{B_{\varepsilon}(x)}u(y)dy
$$
holds for all $x \in \Omega$ and $B_{\varepsilon}(x)\subset\Omega$.
In fact,  an asymptotic version of the mean value property
$$
u(x)=\dashint_{B_{\varepsilon}(x)}u(y)dy+o(\varepsilon^2) \quad \textmd{as  } \varepsilon\rightarrow0
$$
suffices to characterize harmonic functions (see \cite{Bla,Ku,Pr}). Moreover, a nonlinear mean value property was explored in  \cite{MPR1} that  continuous function $u$ is a viscosity solution of the $p$-Laplace equation
$$-\Delta_pu=-\text{\rm{div}}(\lvert \nabla u \rvert^{p-2}\nabla u)=0 \quad \text{in } \Omega$$
if and only if the asymptotic expansion 
\begin{equation*}
	u(x)=\frac{\alpha_p}{2}\left\{\mathop{\rm{max}}\limits_{\overline{B_\varepsilon(x)}}u+\mathop{\rm{min}}\limits_{\overline{B_\varepsilon(x)}}u\right\}+\beta_p\dashint_{B_\varepsilon(x)}u(y)dy+o(\varepsilon^2) \quad \text{as } \varepsilon\rightarrow 0
\end{equation*}
holds for all $x\in \Omega$ in the viscosity sense, where $\alpha_p+\beta_p=1$ and $\frac{\alpha_p}{\beta_p}=\frac{p-2}{N+2}$. The second term is a linear one, while the first term counts for the nonlinearity: the greater the $p$, the more nonlinear the formula. The expression holds in the viscosity sense, which means that when the $C^2$ test function $\phi$ with non-vanishing gradient is close to $u$ from below (above),  the expression is satisfied with $\geq$ ($\leq$) for the test function at $x$ respectively.

These mean value formulas originate from the study of dynamic programming for tug-of-war games.  The viscosity solution of the normalized parabolic $p$-Laplace equation is characterized by an asymptotic mean value formula, which is related to the tug-of-war game with noise, see \cite{FZ23,Le2,Le,MPR2,PSSW,PS}. For more related asymptotic mean value results, we refer to \cite{FLM} for $p$-harmonic functions in the Heisenberg group, \cite{BCR} for Monge-Amp\`{e}re equation, \cite{BCM,MPR2} for the nonlinear parabolic equations and the recently published monograph \cite{BR}  for historical references and more general equations.

From the results mentioned above, we can see that there are few results concerning the asymptotic mean value properties for the general nonuniformly elliptic and parabolic equations. Motivated by the previous works  \cite{MPR1,MPR2}, our intention in the present paper is to build a new bridge between the viscosity solutions and the asymptotic mean value formula for the double phase equations \eqref{1-1} and \eqref{1-6}. In addition, the method developed here can also be used to more equations, such as the $p(x)$-Laplace equations and the variable coefficient $p$-Laplace type equations. The first result is stated as follows.

\begin{theorem}\label{thm1}
Let $1<p\leq q<\infty$,  the non-negative function $a(x)$ be a $C^1$ function in $\Omega$ and let $u(x)$ be a continuous function in $\Omega$. Then Eq. \eqref{1-1}
holds in the viscosity sense if and only if the asymptotic expansion 
\begin{align}\label{1-4}
u(x)&=\frac{\alpha_p+M_u(x)\alpha_q}{2(1+M_u(x))}\left\{\max \limits_{\overline{B_\varepsilon(x)}}u+\min\limits_{\overline{B_\varepsilon(x)}}u\right\}+\dfrac{\beta_p+M_u(x)\beta_q}{1+M_u(x)}\dashint_{B_\varepsilon(x)}u(y)dy \nonumber\\
&\quad+\frac{\varepsilon\lvert\nabla u(x) \rvert^{q-p}}{4(N+p)(1+M_u(x))}\dashint_{B_\varepsilon(x)}u(y+\varepsilon\nabla a(x))-u(y-\varepsilon\nabla a(x))dy+o(\varepsilon^2)
\end{align}
as $\varepsilon\rightarrow 0$, holds for all $x\in \Omega$ in the viscosity sense. Here 
\begin{align}\label{1-5}
&\alpha_p+\beta_p=1, \quad \frac{\alpha_p}{\beta_p}=\frac{p-2}{N+2}, \nonumber\\
&\alpha_q+\beta_q=1, \quad \frac{\alpha_q}{\beta_q}=\frac{q-2}{N+2},  \\
&M_u(x)=a(x)\dfrac{N+q}{N+p}\lvert\nabla u(x) \rvert^{q-p} \nonumber.
\end{align}
\end{theorem}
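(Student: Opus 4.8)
The plan is to reduce both implications to a single Taylor expansion of the averaging operator on the right-hand side of \eqref{1-4}. First I fix $\phi\in C^{2}$ in a neighbourhood of a point $x_{0}$ with $\nabla\phi(x_{0})\neq0$, expand the divergence in \eqref{1-1}, and divide by $|\nabla\phi(x_{0})|^{p-2}>0$; this rewrites the equation as $\mathcal L\phi(x_{0})=0$, where, with all quantities evaluated at $x_{0}$,
\begin{align*}
\mathcal L\phi:= \bigl(1+a|\nabla\phi|^{q-p}\bigr)\Delta\phi
&+\bigl((p-2)+(q-2)a|\nabla\phi|^{q-p}\bigr)\frac{\langle D^{2}\phi\,\nabla\phi,\nabla\phi\rangle}{|\nabla\phi|^{2}}\\
&+|\nabla\phi|^{q-p}\,\nabla a\cdot\nabla\phi .
\end{align*}
By the viscosity formulation of \eqref{1-1} recalled earlier, $u$ is a supersolution iff $\mathcal L\phi(x_{0})\le0$ whenever $\phi$ touches $u$ from below at $x_{0}$ with non-vanishing gradient, and a subsolution iff $\mathcal L\phi(x_{0})\ge0$ whenever $\phi$ touches from above; since $p\le q$, test functions with $\nabla\phi(x_{0})=0$ are covered by the convention in that definition.

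\textbf{Step 2 (expansion of the mean value operator).} For $\phi$ as above I use the classical asymptotics
\[
\dashint_{B_{\varepsilon}(x_{0})}\!\phi\,dy=\phi(x_{0})+\frac{\varepsilon^{2}\,\Delta\phi(x_{0})}{2(N+2)}+o(\varepsilon^{2}),\qquad
\max_{\overline{B_{\varepsilon}(x_{0})}}\phi+\min_{\overline{B_{\varepsilon}(x_{0})}}\phi=2\phi(x_{0})+\varepsilon^{2}\,\frac{\langle D^{2}\phi(x_{0})\nabla\phi(x_{0}),\nabla\phi(x_{0})\rangle}{|\nabla\phi(x_{0})|^{2}}+o(\varepsilon^{2}),
\]
together with a new one for the shift term: after the substitution $z=y\pm\varepsilon\nabla a(x_{0})$, cancellation of odd-order contributions, and $\dashint_{B_{\varepsilon}(x_{0})}\nabla\phi\,dy=\nabla\phi(x_{0})+o(\varepsilon)$,
\[
\dashint_{B_{\varepsilon}(x_{0})}\!\bigl[\phi(y+\varepsilon\nabla a(x_{0}))-\phi(y-\varepsilon\nabla a(x_{0}))\bigr]\,dy=2\varepsilon\,\nabla a(x_{0})\cdot\nabla\phi(x_{0})+o(\varepsilon^{2}).
\]
Substituting into the right-hand side of \eqref{1-4} with $u$ replaced by $\phi$: the zeroth-order terms collapse to $\phi(x_{0})$ since $\tfrac{\alpha_{p}+M_{\phi}\alpha_{q}}{1+M_{\phi}}+\tfrac{\beta_{p}+M_{\phi}\beta_{q}}{1+M_{\phi}}=1$ by $\alpha_{p}+\beta_{p}=\alpha_{q}+\beta_{q}=1$, and, after inserting $\alpha_{p}=\tfrac{p-2}{N+p}$, $\beta_{p}=\tfrac{N+2}{N+p}$, $\alpha_{q}=\tfrac{q-2}{N+q}$, $\beta_{q}=\tfrac{N+2}{N+q}$ and $M_{\phi}(x_{0})=a(x_{0})\tfrac{N+q}{N+p}|\nabla\phi(x_{0})|^{q-p}$, a direct computation shows the $\varepsilon^{2}$-terms reassemble exactly into $\mathcal L\phi$:
\[
\bigl(\text{RHS of \eqref{1-4} with }\phi\bigr)=\phi(x_{0})+\frac{\varepsilon^{2}}{2\bigl[(N+p)+a(x_{0})(N+q)|\nabla\phi(x_{0})|^{q-p}\bigr]}\,\mathcal L\phi(x_{0})+o(\varepsilon^{2}),
\]
with strictly positive denominator. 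This identity is the computational heart of the argument, and the place where I expect the bookkeeping to be heaviest: one must verify that the constant $\tfrac14$ in front of the shift term in \eqref{1-4} is precisely the one that makes $|\nabla\phi|^{q-p}\nabla a\cdot\nabla\phi$ emerge with the correct weight once the $(1+M_{\phi})$-type denominators are cleared.

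\textbf{Step 3 (equivalence and the degenerate case).} Granted Step 2, both directions follow at once. If $u$ is a viscosity solution and $\phi$ touches $u$ from below at $x_{0}$ with $\nabla\phi(x_{0})\neq0$, then $\mathcal L\phi(x_{0})\le0$, and positivity of the denominator turns the identity into $\phi(x_{0})\ge\bigl(\text{RHS of \eqref{1-4} with }\phi\bigr)+o(\varepsilon^{2})$; touching from above gives the opposite inequality in the same way, so \eqref{1-4} holds in the viscosity sense. Conversely, if \eqref{1-4} holds in the viscosity sense, then for $\phi$ touching from below, dividing $\phi(x_{0})-\bigl(\text{RHS with }\phi\bigr)\ge o(\varepsilon^{2})$ by $\varepsilon^{2}$ and letting $\varepsilon\to0$ forces $\mathcal L\phi(x_{0})\le0$, i.e.\ $u$ is a supersolution, and from above a subsolution. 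The degenerate case $\nabla\phi(x_{0})=0$ must be handled separately: there $M_{\phi}(x_{0})=0$ when $p<q$ and $M_{\phi}(x_{0})=a(x_{0})$ when $p=q$, the shift term is $o(\varepsilon^{2})$, the $\max/\min$ expansion is governed instead by the extreme eigenvalues of $D^{2}\phi(x_{0})$, and one checks that the resulting expansion is consistent with the convention used in the definition of viscosity solution.

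Apart from the algebra in Step 2, the only genuinely delicate point is the one-sided asymptotics of $\max_{\overline{B_{\varepsilon}(x_{0})}}\phi$ and $\min_{\overline{B_{\varepsilon}(x_{0})}}\phi$, which requires locating the extremal points up to order $\varepsilon$ and uses $\nabla\phi(x_{0})\neq0$ to ensure that they lie on $\partial B_{\varepsilon}(x_{0})$. Everything else is routine Taylor expansion and elementary manipulation of the weights $\alpha_{p},\beta_{p},\alpha_{q},\beta_{q}$ and $M_{u}$.
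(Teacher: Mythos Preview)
Your proof is correct and follows essentially the same route as the paper: rewrite the operator in non-divergence form, expand each of the three pieces of the averaging operator (the ball mean via Lemma~\ref{lem3}, the max/min term via Lemmas~\ref{lem1}--\ref{lem2}, and the shift term via Lemma~\ref{lem4}), and read off the equivalence from the sign of the resulting $\varepsilon^{2}$-coefficient.

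The one organizational difference worth noting is that you assert the two-sided identity
\[
\max_{\overline{B_{\varepsilon}(x_{0})}}\phi+\min_{\overline{B_{\varepsilon}(x_{0})}}\phi
=2\phi(x_{0})+\varepsilon^{2}\,\Delta_{\infty}\phi(x_{0})+o(\varepsilon^{2}),
\]
whereas the paper (following \cite{MPR1}) works only with the one-sided inequalities of Lemma~\ref{lem1} and must therefore split into cases according to the signs of $\alpha_{p}$ and $\alpha_{q}$ (i.e.\ $p\gtrless 2$, $q\gtrless 2$) in order to know which of \eqref{2-6} or \eqref{2-7} to multiply by. Your two-sided expansion is indeed valid for $C^{2}$ test functions with $\nabla\phi(x_{0})\neq 0$ (it follows by combining \eqref{2-6}, \eqref{2-7} and Lemma~\ref{lem2}), and using it buys you a cleaner single identity and no case analysis; the paper's version is slightly more elementary in that it never needs the full equality. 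One small over-reach: your discussion of the degenerate case $\nabla\phi(x_{0})=0$ in Step~3 is unnecessary in the elliptic setting, since both Definition~\ref{def1} and Definition~\ref{def2} restrict to test functions with non-vanishing gradient.
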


\begin{remark}
Note that here we only require that $u$ is a continuous function. However,  $\nabla u$ appears in the formula \eqref{1-4} due to  the fact that it is an expression in the viscosity sense. In other words, we focus on the $C^2$ test function $\phi$ that approaches $u$ from above and below. More details will be given in Section \ref{sec2}.
\end{remark}
 
\begin{remark}
From the formula \eqref{1-4}, we can see that all the terms on the right-hand side are nonlinear, which is different from the standard $p$-Laplace equation.  The exponents $p, q$ and the non-negative coefficient  $a(x)$ coupling together influence on the nonlinearity in a delicate way. In particular, when $a(x)$ is a positive constant, Eq. \eqref{1-1} is nothing but the $(p,q)$-Laplace equation. Then the third term 
$$
\frac{\varepsilon\lvert\nabla u(x) \rvert^{q-p}}{4(N+p)(1+M_u(x))}\dashint_{B_\varepsilon(x)}u(y+\varepsilon\nabla a(x))-u(y-\varepsilon\nabla a(x))dy
$$
will vanish.
\end{remark}

Next, we turn to the parabolic case. Let $T>0$, $\Omega_T=\Omega\times(0,T)$ be a space-time cylinder, and let $a(x,t)\geq 0$ be a function that is $C^1$ in the space variable and continuous in the time variable, respectively. We consider the following parabolic equation
\begin{equation}\label{1-6}
u_t=\lvert\nabla u \rvert ^{2-p}\text{\rm{div}}(\lvert \nabla u \rvert^{p-2}\nabla u+ a(x,t)\lvert\nabla u \rvert^{q-2}\nabla u)\quad \text{\rm{in }}\Omega_T,
\end{equation}
which is called the \textit{normalized double phase} parabolic equation. The difference between elliptic and  the normalized parabolic case is that we have to consider the influence of time variable $t$ in parabolic setting. To this end, we try to separate the estimates according to $p$ and $q$, and consider the integrals in  different time intervals. Finally, we find that when the two time lags satisfy certain viscosity condition, $u$ satisfies the asymptotic mean value formula in the viscosity sense is equivalent to $u$ is the viscosity solution to Eq.  \eqref{1-6}. The second result is stated as follows.

\begin{theorem}\label{thm2}
Let $1<p\leq q<\infty$ , the non-negative function $a(x,t)$ be a function that is $C^1$ in the space variable, and continuous in the time variable and let $u(x,t)$ be a continuous function in $\Omega_T$. Then Eq. \eqref{1-6} 
holds in the viscosity sense if and only if the asymptotic expansion 
\begin{align}\label{1-7}
u(x,t)&=\dfrac{1}{1+M_u(x,t)}\left(\dfrac{\alpha_p}{2}\dashint_{t-\frac{\varepsilon^2}{A_u(x,t)}}^{t}\left\{\max\limits_{y\in\overline{B_\varepsilon(x)}}u(y,s)+\min\limits_{y\in\overline{B_\varepsilon(x)}}u(y,s)\right\}ds\right. \nonumber \\
&\quad+\left.\beta_p\dashint_{t-\frac{\varepsilon^2}{A_u(x,t)}}^t\dashint_{B_\varepsilon(x)}u(y,s)dyds \right) \nonumber\\
&\quad+\dfrac{M_u(x,t)}{1+M_u(x,t)}\left(\dfrac{\alpha_q}{2}\dashint_{t-\frac{\varepsilon^2}{B_u(x,t)}}^{t}\left\{\max\limits_{y\in\overline{B_\varepsilon(x)}}u(y,s)+\min\limits_{y\in\overline{B_\varepsilon(x)}}u(y,s)\right\}ds\right. \nonumber\\
&\quad\left.+\beta_q\dashint_{t-\frac{\varepsilon^2}{B_u(x,t)}}^t\dashint_{B_\varepsilon(x)}u(y,s)dyds \right) \nonumber\\
&\quad+\dfrac{\varepsilon\lvert\nabla u(x,t) \rvert^{q-p}}{4(N+p)(1+M_u(x,t))}\dashint_{B_\varepsilon(x)}u(y+\varepsilon\nabla a(x,t),t)-u(y-\varepsilon\nabla a(x,t),t)dy \nonumber\\
&\quad+o(\varepsilon^2)
\end{align}
as $\varepsilon \rightarrow0$, holds for all $(x,t)\in \Omega_T$ in the viscosity sense. Here
\begin{align}\label{1-8}
\begin{split}
&\alpha_p+\beta_p=1, \quad \frac{\alpha_p}{\beta_p}=\dfrac{p-2}{N+2},\\
&\alpha_q+\beta_q=1, \quad \frac{\alpha_q}{\beta_q}=\dfrac{q-2}{N+2},\\
&M_u(x,t)=a(x,t)\frac{N+q}{N+p}\lvert\nabla u(x,t) \rvert^{q-p},\\
 &\frac{N+p}{A_u(x,t)}+\frac{a(x,t)(N+q)\lvert\nabla u(x,t) \rvert^{q-p}}{B_u(x,t)}=1, \quad A_u(x,t),  B_u(x,t)>0.
\end{split}
\end{align} 
\end{theorem}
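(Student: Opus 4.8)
The plan is to mimic the classical strategy for nonlinear asymptotic mean value formulas (as in \cite{MPR1,MPR2}), but to organize the computation so that the $p$-part and the $q$-part are handled by the same Taylor-expansion lemma and then glued with the weights $1/(1+M_u)$ and $M_u/(1+M_u)$. Since the statement is an equivalence in the viscosity sense, I would fix a point $(x_0,t_0)\in\Omega_T$ and a $C^2$ test function $\phi$ touching $u$ from below at $(x_0,t_0)$ with $\nabla\phi(x_0,t_0)\neq 0$, and prove that $\phi$ satisfies the asymptotic expansion \eqref{1-7} with ``$\geq$'' at $(x_0,t_0)$ if and only if $\phi$ is a viscosity subsolution-type inequality there; the opposite touching gives the ``$\leq$'' case, and the two together yield the theorem. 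The point is that everything reduces to a pointwise identity for smooth $\phi$, because both ``in the viscosity sense'' statements are tested against the same $\phi$.

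First I would record the two building-block expansions for a $C^2$ function $\phi$ with $\nabla\phi(x_0)\neq0$: as $\varepsilon\to0$,
\begin{align*}
\frac{1}{2}\Bigl\{\max_{\overline{B_\varepsilon(x_0)}}\phi+\min_{\overline{B_\varepsilon(x_0)}}\phi\Bigr\}-\phi(x_0)&=\frac{\varepsilon^2}{2}\Bigl\langle D^2\phi(x_0)\frac{\nabla\phi(x_0)}{|\nabla\phi(x_0)|},\frac{\nabla\phi(x_0)}{|\nabla\phi(x_0)|}\Bigr\rangle+o(\varepsilon^2),\\
\dashint_{B_\varepsilon(x_0)}\phi(y)\,dy-\phi(x_0)&=\frac{\varepsilon^2}{2(N+2)}\Delta\phi(x_0)+o(\varepsilon^2),
\end{align*}
so that a convex combination $\tfrac{\alpha_r}{2}\{\max+\min\}+\beta_r\dashint$ with $\alpha_r/\beta_r=(r-2)/(N+2)$, $\alpha_r+\beta_r=1$ reproduces, up to the factor $\beta_r\varepsilon^2/(2(N+2))$, the normalized $r$-Laplacian $\Delta_r^N\phi=\Delta\phi+(r-2)\langle D^2\phi\,\tfrac{\nabla\phi}{|\nabla\phi|},\tfrac{\nabla\phi}{|\nabla\phi|}\rangle$. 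Next I would expand the drift term: since $u(y\pm\varepsilon\nabla a(x_0,t_0),t_0)$ is evaluated at a shift of order $\varepsilon$, a second-order Taylor expansion gives $\dashint_{B_\varepsilon(x_0)}[\phi(y+\varepsilon\nabla a)-\phi(y-\varepsilon\nabla a)]\,dy=2\varepsilon\langle\nabla a(x_0,t_0),\nabla\phi(x_0)\rangle+o(\varepsilon)$ after averaging (the odd-order-in-$y$ terms drop by symmetry of $B_\varepsilon$), so the whole third line of \eqref{1-7} contributes $\tfrac{\varepsilon^2|\nabla\phi|^{q-p}}{2(N+p)(1+M_u)}\langle\nabla a,\nabla\phi\rangle+o(\varepsilon^2)$, which is exactly the term needed to turn $|\nabla\phi|^{p-2}\operatorname{div}(\dots)+a|\nabla\phi|^{q-2}\operatorname{div}(\dots)$ into its divergence-expanded form $|\nabla\phi|^{p-2}\Delta_p^N\phi+a|\nabla\phi|^{q-2}\Delta_q^N\phi+|\nabla\phi|^{q-2}\langle\nabla a,\nabla\phi\rangle$ (times appropriate powers). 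For the parabolic case I additionally expand the time averages $\dashint_{t_0-\varepsilon^2/A_u}^{t_0}\psi(s)\,ds=\psi(t_0)-\tfrac{\varepsilon^2}{2A_u}\psi'(t_0)+o(\varepsilon^2)$, so the two time-lag terms feed $-\phi_t(x_0,t_0)\bigl(\tfrac{\beta_p}{2A_u}+\tfrac{M_u\beta_q}{2B_u}\bigr)$ into the expansion; the calibration $\tfrac{N+p}{A_u}+\tfrac{a(N+q)|\nabla\phi|^{q-p}}{B_u}=1$ is precisely what makes the coefficient of $\phi_t$ equal to $-\tfrac{\varepsilon^2}{2(N+2)}\cdot\tfrac{1}{\text{(normalizing constant)}}$, matching the left-hand side's $u(x,t)=\phi(x_0,t_0)$ once everything is divided through.

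Assembling these, subtracting $\phi(x_0,t_0)$ from both sides of \eqref{1-7}, multiplying by $1+M_u$ and by $2(N+2)/\varepsilon^2$, and letting $\varepsilon\to0$, the expansion \eqref{1-7} with ``$\geq$'' becomes exactly the inequality $\phi_t(x_0,t_0)\le |\nabla\phi(x_0,t_0)|^{2-p}\operatorname{div}\bigl(|\nabla\phi|^{p-2}\nabla\phi+a|\nabla\phi|^{q-2}\nabla\phi\bigr)\big|_{(x_0,t_0)}$ (and ``$\leq$'' gives the reverse), i.e. $\phi$ is a viscosity sub/supersolution test, which is the definition of $u$ solving \eqref{1-6} viscosity-wise; here one uses that $M_u=a\tfrac{N+q}{N+p}|\nabla\phi|^{q-p}$ is exactly the ratio $a|\nabla\phi|^{q-2}/|\nabla\phi|^{p-2}$ rescaled by $(N+q)/(N+p)$ so that the $\beta_p,\beta_q$-weighted Laplacians recombine correctly. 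I expect the main obstacle to be the bookkeeping in the parabolic case: one must be careful that the two \emph{different} time intervals $[t_0-\varepsilon^2/A_u,t_0]$ and $[t_0-\varepsilon^2/B_u,t_0]$ produce the correct single coefficient in front of $\phi_t$, and this is where the constraint on $A_u,B_u$ is used — and, since $A_u,B_u$ are only required to be positive subject to one linear relation, one must check the expansion is independent of the particular admissible choice. A second delicate point, as in \cite{MPR1}, is the degenerate case $\nabla\phi(x_0,t_0)=0$: there the $\max/\min$ expansion fails to isolate a single second-order direction, and one must invoke the standard device (using the largest/smallest eigenvalue of $D^2\phi$ and the definition of viscosity solution at critical points, or simply excluding such test functions by the convention adopted in the paper's viscosity definition in Section~\ref{sec2}) to close the argument. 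The converse direction — deducing the asymptotic formula from the PDE — runs the same computation backwards, using continuity of $u$ to replace $\phi$ by $u$ in the zeroth-order terms and the viscosity inequalities to control the sign of the $\varepsilon^2$-coefficient.
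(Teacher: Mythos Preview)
Your plan is the paper's plan: expand each ingredient of \eqref{1-7} for a $C^2$ test function $\phi$ via the parabolic analogues of Lemmas~\ref{lem1}--\ref{lem4} (the paper's Lemmas~\ref{lem5}--\ref{lem8}), then identify the $\varepsilon^2$-coefficient with the operator in \eqref{1-6}. The only structural difference is that the paper uses the one-sided inequalities of Lemma~\ref{lem5} for the $\max+\min$ term and therefore splits into cases according to the sign of $\alpha_p,\alpha_q$ (i.e.\ $p\gtrless2$), whereas your asserted equality for that term is also valid when $\nabla\phi\neq0$ and would avoid the case split; the two routes are equivalent.

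Two points need correction. First, your $\phi_t$ bookkeeping is off: \emph{all four} time-averaged terms in \eqref{1-7} (the $\alpha$-weighted $\max+\min$ terms as well as the $\beta$-weighted integrals) contribute a $-\tfrac{\varepsilon^2}{2}\phi_t$ piece, so after summing and using $\alpha_r+\beta_r=1$ the coefficient of $-\phi_t$ is $\tfrac{1}{1+M_\phi}\bigl(\tfrac{1}{2A_\phi}+\tfrac{M_\phi}{2B_\phi}\bigr)$, not $\tfrac{\beta_p}{2A_\phi}+\tfrac{M_\phi\beta_q}{2B_\phi}$. Since $a(N+q)|\nabla\phi|^{q-p}=(N+p)M_\phi$, the constraint in \eqref{1-8} reads $(N+p)\bigl(\tfrac{1}{A_\phi}+\tfrac{M_\phi}{B_\phi}\bigr)=1$, which is exactly what forces this coefficient to equal $\tfrac{1}{2(N+p)(1+M_\phi)}$; your version would produce the wrong calibration. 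Second, you have the touching direction reversed: $\phi$ touching $u$ from below means $u-\phi$ has a strict minimum, which is the \emph{supersolution} test and yields $\phi_t\ge\cdots$, not $\le$.

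Finally, the degenerate case $\nabla\phi(x_0,t_0)=0$ cannot simply be excluded by convention in the parabolic setting: Definition~\ref{def3} requires something there. The paper reduces (via the standard lemma it states just after Definition~\ref{def3}) to test functions with $\nabla\phi=0$ and $D^2\phi=0$, for which $M_\phi=0$, $A_\phi=N+p$, and a direct Taylor expansion of the mean value formula gives $\phi_t\ge0$; you should carry this step out rather than defer it.
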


\begin{remark}
It is worth mentioning that the positive functions $A(x,t)$ and $B(x,t)$ depend on the test function $\phi$. It means that  $$
\frac{N+p}{A_u(x,t)}+\frac{a(x,t)(N+q)\lvert\nabla u(x,t) \rvert^{q-p}}{B_u(x,t)}=1
$$ 
holds in the viscosity sense, which we called the \textit{viscosity condition}.
\end{remark}

This manuscript is organized as follows. In Section \ref{sec2}, we introduce the basic definitions and give some necessary lemmas that will be used later. In Section \ref{sec3}, we give the proof of Theorem \ref{thm1} and present some corollaries, including the $p(x)$-Laplace type equation. Finally, we prove Theorem \ref{thm2} in Section \ref{sec4}.

\section{Preliminaries}
\noindent
\par
\label{sec2}

In this section, inspired by the ideas developed in \cite{MPR1}, we first give the definition of the asymptotic mean value formula for $u$ at $x\in \Omega$.

\begin{definition}\label{def1}
A continuous function u satisfies 
\begin{align*}
u(x)&=\dfrac{\alpha_p+M_u(x)\alpha_q}{2(1+M_u(x))}\left\{\mathop{\rm{max}}\limits_{\overline{B_\varepsilon(x)}}u+\mathop{\rm{min}}\limits_{\overline{B_\varepsilon(x)}}u\right\}+\dfrac{\beta_p+M_u(x)\beta_q}{1+M_u(x)}\dashint_{B_\varepsilon(x)}u(y)dy\\
&\quad+\dfrac{\varepsilon\lvert\nabla u(x) \rvert^{q-p}}{4(N+p)(1+M_u(x))}\dashint_{B_\varepsilon(x)}u(y+\varepsilon\nabla a(x))-u(y-\varepsilon\nabla a(x))dy+o(\varepsilon^2)
\end{align*}
as $\varepsilon\rightarrow0$, in the viscosiy sense if 
\begin{itemize}
\item [(i)] for every $\phi\in C^2$ such that $u-\phi$ has a strict minimum at the point $x\in\Omega$ with $u(x)=\phi(x)$ and $\nabla\phi(x)\neq0$, we have
\begin{align}\label{2-1}
\begin{split}
0&\geq -\phi(x)+\dfrac{\alpha_p+M_{\phi}(x)\alpha_q}{2(1+M_{\phi}(x))}\left\{\mathop{\rm{max}}\limits_{\overline{B_\varepsilon(x)}}\phi+\mathop{\rm{min}}\limits_{\overline{B_\varepsilon(x)}}\phi\right\}+\dfrac{\beta_p+M_{\phi}(x)\beta_q}{1+M_{\phi}(x)}\dashint_{B_\varepsilon(x)}\phi(y)dy\\
&\quad+\dfrac{\varepsilon\lvert\nabla \phi(x) \rvert^{q-p}}{4(N+p)(1+M_{\phi}(x))}\dashint_{B_\varepsilon(x)}\phi(y+\varepsilon\nabla a(x))-\phi(y-\varepsilon\nabla a(x))dy+o(\varepsilon^2).
\end{split}
\end{align}
\item [(ii)] for every $\phi\in C^2$ such that $u-\phi$ has a strict maximum at the point $x\in\Omega$ with $u(x)=\phi(x)$ and $\nabla\phi(x)\neq0$, we have
\begin{align}\label{2-2}
\begin{split}
0&\leq -\phi(x)+\dfrac{\alpha_p+M_{\phi}(x)\alpha_q}{2(1+M_{\phi}(x))}\left\{\mathop{\rm{max}}\limits_{\overline{B_\varepsilon(x)}}\phi+\mathop{\rm{min}}\limits_{\overline{B_\varepsilon(x)}}\phi\right\}+\dfrac{\beta_p+M_{\phi}(x)\beta_q}{1+M_{\phi}(x)}\dashint_{B_\varepsilon(x)}\phi(y)dy\\
&\quad+\dfrac{\varepsilon\lvert\nabla \phi(x) \rvert^{q-p}}{4(N+p)(1+M_{\phi}(x))}\dashint_{B_\varepsilon(x)}\phi(y+\varepsilon\nabla a(x))-\phi(y-\varepsilon\nabla a(x))dy+o(\varepsilon^2).
\end{split}
\end{align}
\end{itemize}
\end{definition}

Next, we consider the viscosity solution of the double phase elliptic equations. Let us expand the left-hand side of Eq. \eqref{1-1} as follows: 
\begin{align*}
&\quad\text{\rm{div}}(\lvert \nabla u \rvert^{p-2}\nabla u+ a(x)\lvert\nabla u \rvert^{q-2}\nabla u)\\
&=\lvert \nabla u \rvert^{p-2}((p-2)\Delta_{\infty}u+\Delta u)+a(x)\lvert \nabla u \rvert^{q-2}((q-2)\Delta_{\infty}u+\Delta u)\\
&\quad+\lvert \nabla u \rvert^{q-2}\langle \nabla a, \nabla u\rangle,
\end{align*} 
where $\Delta_{\infty}u=|\nabla u|^{-2}\left<D^2u\nabla u,\nabla u\right>$.

Suppose that $u$ is a smooth function with $\nabla u \neq0$, we can see  that $u$ is a solution to Eq.  \eqref{1-1}
if and only if  
\begin{align}\label{2-3}
&-(p-2)\Delta_{\infty}u-\Delta u-a(x)\lvert \nabla u \rvert^{q-p}((q-2)\Delta_{\infty}u+\Delta u)\nonumber \\
&\quad -\lvert \nabla u \rvert^{q-p}\langle \nabla a, \nabla u\rangle=0. 
\end{align} 
 
 \smallskip
 
Then we give the definition of viscosity solutions to Eq.  \eqref{1-1}.

\begin{definition} [\cite{FZ}, Definition 2.5]\label{def2}
Let $1<p\leq q<\infty$ and consider the equation
$$-\text{\rm{div}}(\lvert \nabla u \rvert^{p-2}\nabla u+a(x)\lvert\nabla u \rvert^{q-2}\nabla u)=0.$$

\begin{itemize}
	\item [(i)] A lower semi-continuous function $u$ is a viscosity supersolution if for every $\phi \in C^2$ such that $u-\phi$ has a strict minimum at the point $x\in \Omega$ with $\nabla\phi(x)\neq 0$ we have
\begin{align}
\label{2-4}
&-\left((p-2)\Delta_{\infty}\phi(x)+\Delta \phi(x)\right)-a(x)\lvert\nabla \phi(x)\vert^{q-p}\left((q-2)\Delta_{\infty}\phi(x)+\Delta \phi(x)\right) \nonumber\\
&\quad -\lvert\nabla \phi(x)\vert^{q-p}\langle\nabla a(x),\nabla\phi(x)\rangle \geq0.
\end{align}

\item [(ii)] An upper semi-continuous function $u$ is a viscosity subsolution if for every $\phi \in C^2$ such that $u-\phi$ has a strict maximum at the point $x\in \Omega$ with $\nabla\phi(x)\neq 0$ we have
 \begin{align}\label{2-5}
 \begin{split}
 &-\left((p-2)\Delta_{\infty}\phi(x)+\Delta \phi(x)\right)-a(x)\lvert\nabla \phi(x)\vert^{q-p}\left((q-2)\Delta_{\infty}\phi(x)+\Delta \phi(x)\right)\\
 &\quad -\lvert\nabla \phi(x)\vert^{q-p}\langle\nabla a(x),\nabla\phi(x)\rangle \leq0. 
 \end{split}
 \end{align}

\item [(iii)] Finally, $u$ is a viscosity solution if and only if $u$ is both a viscosity supersolution and a viscosity subsolution.
\end{itemize}
\end{definition}

We next state the following useful results (Lemmas \ref{lem1}--\ref{lem3}), which can be found in \cite[Section 2]{MPR1}.

\begin{lemma}\label{lem1}
Let $\phi$ be a $C^2$ function in a neighborhood of $x$ and let $x_1^{\varepsilon}$ and $x_2^{\varepsilon}$ be the points at which $\phi$ attains its minimum and maximum in $\overline{B_\varepsilon(x)}$ respectively. We have
\begin{align}\label{2-6}
-\phi(x)+\frac{1}{2}\left\{\mathop{\rm{max}}\limits_{\overline{B_\varepsilon(x)}}\phi+\mathop{\rm{min}}\limits_{\overline{B_\varepsilon(x)}}\phi\right\} \geq 
\frac{1}{2}\langle D^2\phi(x)(x^\varepsilon_1-x),(x^\varepsilon_1-x) \rangle+o(\varepsilon^2)
\end{align}
and 
\begin{align}\label{2-7}
-\phi(x)+\frac{1}{2}\left\{\mathop{\rm{max}}\limits_{\overline{B_\varepsilon(x)}}\phi+\mathop{\rm{min}}\limits_{\overline{B_\varepsilon(x)}}\phi\right\}\leq 
\frac{1}{2}\langle D^2\phi(x)(x^\varepsilon_2-x),(x^\varepsilon_2-x) \rangle+o(\varepsilon^2).
\end{align}
\end{lemma}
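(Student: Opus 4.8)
The plan is to combine a uniform second-order Taylor expansion of $\phi$ about $x$ with a reflection trick that cancels the first-order contribution.

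Since $\phi$ is $C^2$ in a neighborhood of $x$, Taylor's formula together with the uniform continuity of $D^2\phi$ on a fixed small closed ball around $x$ gives, uniformly for $y\in\overline{B_\varepsilon(x)}$ as $\varepsilon\to0$,
$$\phi(y)=\phi(x)+\langle\nabla\phi(x),y-x\rangle+\tfrac12\langle D^2\phi(x)(y-x),(y-x)\rangle+o(\varepsilon^2).$$
I would apply this identity at $x_1^\varepsilon$, $x_2^\varepsilon$ and at their reflections $2x-x_1^\varepsilon$, $2x-x_2^\varepsilon$ through $x$; all four points lie in $\overline{B_\varepsilon(x)}$ because $|2x-x_i^\varepsilon-x|=|x_i^\varepsilon-x|\le\varepsilon$, the quadratic form is unchanged under $z\mapsto-z$, while the linear form changes sign.

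For the upper bound \eqref{2-7}: since $x_1^\varepsilon$ minimizes $\phi$ on $\overline{B_\varepsilon(x)}$, we have $\phi(x_1^\varepsilon)\le\phi(2x-x_2^\varepsilon)$, hence
$$\min_{\overline{B_\varepsilon(x)}}\phi+\max_{\overline{B_\varepsilon(x)}}\phi=\phi(x_1^\varepsilon)+\phi(x_2^\varepsilon)\le\phi(2x-x_2^\varepsilon)+\phi(x_2^\varepsilon);$$
expanding the last two terms by the displayed formula, the linear contributions $\mp\langle\nabla\phi(x),x_2^\varepsilon-x\rangle$ cancel and the two equal quadratic contributions add up, yielding
$$\min_{\overline{B_\varepsilon(x)}}\phi+\max_{\overline{B_\varepsilon(x)}}\phi\le 2\phi(x)+\langle D^2\phi(x)(x_2^\varepsilon-x),(x_2^\varepsilon-x)\rangle+o(\varepsilon^2),$$
which is \eqref{2-7} after dividing by $2$ and subtracting $\phi(x)$. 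Symmetrically, for \eqref{2-6} I would use that $x_2^\varepsilon$ maximizes $\phi$, so $\phi(x_2^\varepsilon)\ge\phi(2x-x_1^\varepsilon)$, and add this to the expansion of $\phi(x_1^\varepsilon)$ to obtain
$$\min_{\overline{B_\varepsilon(x)}}\phi+\max_{\overline{B_\varepsilon(x)}}\phi\ge 2\phi(x)+\langle D^2\phi(x)(x_1^\varepsilon-x),(x_1^\varepsilon-x)\rangle+o(\varepsilon^2).$$

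The only delicate point is that the Taylor remainder must be $o(\varepsilon^2)$ \emph{uniformly} over $y\in\overline{B_\varepsilon(x)}$, which is precisely where the $C^2$ regularity of $\phi$ enters (via uniform continuity of $D^2\phi$ near $x$); everything else is bookkeeping. Note that no information about the location of $x_1^\varepsilon,x_2^\varepsilon$ is needed at this stage — the finer asymptotics $x_1^\varepsilon-x=-\varepsilon\,\nabla\phi(x)/|\nabla\phi(x)|+o(\varepsilon)$ and $x_2^\varepsilon-x=\varepsilon\,\nabla\phi(x)/|\nabla\phi(x)|+o(\varepsilon)$, valid when $\nabla\phi(x)\neq0$, are the content of the companion lemmas and are what later converts the right-hand sides of \eqref{2-6}--\eqref{2-7} into $\tfrac{\varepsilon^2}{2}\Delta_\infty\phi(x)+o(\varepsilon^2)$.
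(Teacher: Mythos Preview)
Your argument is correct and is precisely the standard reflection-plus-Taylor proof from \cite[Section~2]{MPR1}, which is exactly what the present paper invokes (the paper does not supply its own proof of this lemma but cites that reference). The key steps---uniform second-order expansion, the observation that $2x-x_i^\varepsilon\in\overline{B_\varepsilon(x)}$, and the cancellation of the linear terms---are all handled correctly.
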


\begin{lemma}\label{lem2}
Let $\phi$ be a $C^2$ function in a neighborhood of $x$ with $\nabla \phi(x) \neq0$. We have
\begin{align}\label{2-8}
\lim\limits_{\varepsilon\rightarrow0+}\dfrac{x^\varepsilon_1-x}{\varepsilon}=-\dfrac{\nabla\phi}{\lvert\nabla\phi\rvert}(x),
\end{align}
where $x_1^{\varepsilon}$ is defined as in Lemma \ref{lem1}.
\end{lemma}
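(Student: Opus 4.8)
The plan is to reduce everything to the first-order Taylor expansion of $\phi$ at $x$ and a short compactness argument. Set $z^\varepsilon := (x_1^\varepsilon - x)/\varepsilon$, so that $|z^\varepsilon|\le 1$ for all small $\varepsilon>0$ (for $\varepsilon$ small, $\overline{B_\varepsilon(x)}$ lies in the neighbourhood where $\phi$ is defined and $C^2$, hence the minimum is attained). The goal is to show $z^\varepsilon \to -\nabla\phi(x)/|\nabla\phi(x)|$. Since $\phi\in C^2$, for $y=x+\varepsilon z$ with $|z|\le 1$ one has the uniform expansion $\phi(x+\varepsilon z)=\phi(x)+\varepsilon\langle\nabla\phi(x),z\rangle+O(\varepsilon^2)$, the $O(\varepsilon^2)$ being controlled by $\tfrac12\varepsilon^2\sup|D^2\phi|$ near $x$.

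First I would compare $x_1^\varepsilon$ with the explicit competitor $y_\varepsilon := x-\varepsilon\,\nabla\phi(x)/|\nabla\phi(x)|$, which belongs to $\overline{B_\varepsilon(x)}$ since $|y_\varepsilon-x|=\varepsilon$. Minimality of $x_1^\varepsilon$ gives $\phi(x+\varepsilon z^\varepsilon)=\phi(x_1^\varepsilon)\le\phi(y_\varepsilon)$; subtracting $\phi(x)$, dividing by $\varepsilon$, and inserting the Taylor expansion on both sides yields $\langle\nabla\phi(x),z^\varepsilon\rangle \le -|\nabla\phi(x)| + O(\varepsilon)$.

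Next, by Bolzano--Weierstrass every sequence $\varepsilon_k\to 0+$ admits a subsequence along which $z^{\varepsilon_k}\to z_0$ with $|z_0|\le 1$. Passing to the limit in the inequality above gives $\langle\nabla\phi(x),z_0\rangle\le -|\nabla\phi(x)|$, while Cauchy--Schwarz together with $|z_0|\le1$ gives $\langle\nabla\phi(x),z_0\rangle\ge -|\nabla\phi(x)|\,|z_0|\ge -|\nabla\phi(x)|$. Hence equality holds throughout, which forces $|z_0|=1$ and the equality case of Cauchy--Schwarz, i.e.\ $z_0=-\nabla\phi(x)/|\nabla\phi(x)|$. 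Since every subsequential limit equals this same vector, the whole family $z^\varepsilon$ converges to it, which is exactly \eqref{2-8}.

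I do not expect a genuine obstacle here; the only point needing a little care is that \emph{a priori} $x_1^\varepsilon$ could lie in the interior of $B_\varepsilon(x)$, but the argument never uses boundary information — the equality case in Cauchy--Schwarz automatically produces $|z_0|=1$, so the rescaled minimizers are forced to the sphere in the limit. The analogous statement for the maximizer $x_2^\varepsilon$ of Lemma \ref{lem1} follows by the symmetric argument, comparing instead with $x+\varepsilon\,\nabla\phi(x)/|\nabla\phi(x)|$ and obtaining $\langle\nabla\phi(x),z^\varepsilon\rangle \ge |\nabla\phi(x)| + O(\varepsilon)$, so that $(x_2^\varepsilon-x)/\varepsilon\to \nabla\phi(x)/|\nabla\phi(x)|$.
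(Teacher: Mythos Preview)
Your argument is correct. The paper does not actually prove this lemma --- it simply quotes it from \cite[Section~2]{MPR1} --- and your Taylor-expansion-plus-compactness argument, with the equality case of Cauchy--Schwarz pinning down the limit, is essentially the standard proof given there.
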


\begin{lemma}\label{lem3}
Let $\phi$ be a $C^2$ function in a neighborhood of $x$.  We have
\begin{align*}
-\phi(x)+\dashint_{B_\varepsilon(x)}\phi(y)dy=\dfrac{\varepsilon^2}{2(N+2)}\Delta\phi(x)+o(\varepsilon^2) \quad\text{as } \varepsilon\rightarrow0.
\end{align*}
\end{lemma}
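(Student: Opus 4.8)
The plan is to Taylor-expand $\phi$ about $x$ to second order and integrate over $B_\varepsilon(x)$, exploiting the symmetry of the ball to kill the odd-order contributions and then computing the second moment of the unit ball explicitly.

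First I would write, for $y$ in a fixed neighborhood of $x$ on which $\phi\in C^2$,
$$\phi(y)=\phi(x)+\langle\nabla\phi(x),y-x\rangle+\tfrac12\langle D^2\phi(x)(y-x),y-x\rangle+R(y),$$
where, by the Peano form of Taylor's theorem together with the continuity of $D^2\phi$, the remainder satisfies $R(y)=o(|y-x|^2)$ as $y\to x$; more precisely, for every $\eta>0$ there is $\delta>0$ such that $|R(y)|\le\eta|y-x|^2$ whenever $|y-x|<\delta$. Averaging this identity over $B_\varepsilon(x)$, the constant term $\phi(x)$ cancels the $-\phi(x)$ on the left-hand side, and after the change of variables $y=x+\varepsilon z$ with $z\in B_1(0)$ the linear term contributes $\varepsilon\,\langle\nabla\phi(x),\dashint_{B_1(0)}z\,dz\rangle=0$, since $B_1(0)$ is symmetric about the origin.

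For the quadratic term, I would use that $\dashint_{B_1(0)}z_iz_j\,dz=0$ for $i\ne j$ (the integrand is odd in $z_i$) and, by rotational invariance of $B_1(0)$, $\dashint_{B_1(0)}z_i^2\,dz=\tfrac1N\dashint_{B_1(0)}|z|^2\,dz$ for each $i$. Hence
$$\frac{\varepsilon^2}{2}\sum_{i,j}\partial_{ij}\phi(x)\dashint_{B_1(0)}z_iz_j\,dz=\frac{\varepsilon^2}{2N}\Big(\dashint_{B_1(0)}|z|^2\,dz\Big)\Delta\phi(x).$$
Computing in polar coordinates, with $\omega_{N-1}=|\partial B_1(0)|$ and $|B_1(0)|=\omega_{N-1}/N$, one has $\int_{B_1(0)}|z|^2\,dz=\omega_{N-1}\int_0^1 r^{N+1}\,dr=\omega_{N-1}/(N+2)$, so that $\dashint_{B_1(0)}|z|^2\,dz=N/(N+2)$. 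Substituting yields exactly $\dfrac{\varepsilon^2}{2(N+2)}\Delta\phi(x)$. Finally, for $\varepsilon<\delta$ the remainder contributes $\big|\dashint_{B_\varepsilon(x)}R(y)\,dy\big|\le\eta\,\dashint_{B_\varepsilon(x)}|y-x|^2\,dy=\eta\,\varepsilon^2\cdot\frac{N}{N+2}\le\eta\,\varepsilon^2$, and since $\eta>0$ is arbitrary this term is $o(\varepsilon^2)$. Collecting the four contributions gives the claimed expansion.

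The main obstacle here is essentially nominal: the statement is a standard mean-value Taylor expansion, and the only point deserving a line of care is that the Taylor remainder is $o(\varepsilon^2)$ \emph{uniformly} over $B_\varepsilon(x)$, which follows immediately from $\phi\in C^2$ (equivalently, from uniform continuity of $D^2\phi$ on a compact neighborhood of $x$). Everything else reduces to the elementary evaluation of the first and second moments of the unit ball.
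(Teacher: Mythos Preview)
Your argument is correct: the Taylor expansion, the vanishing of the first-order term by symmetry, the computation $\dashint_{B_1(0)}|z|^2\,dz=N/(N+2)$, and the uniform $o(\varepsilon^2)$ control of the remainder are all handled properly. Note that the paper does not actually give its own proof of this lemma; it simply quotes the result from \cite[Section~2]{MPR1}, where the same Taylor-expansion computation underlies it, so your approach is essentially the standard one being invoked.
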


Although Lemma \ref{lem1} and Lemma \ref{lem3} provide the bridge between the viscosity solution of $p$-Laplace equation $-\Delta_pu=0$ and the asymptotic mean value formula in \cite{MPR1}, it is not enough for the double phase elliptic equation due to the presence of the term $\left<\nabla a,\nabla u \right>$  in Eq. \eqref{2-3}. Therefore, we need  the following lemma.

\begin{lemma}\label{lem4}
Let $\phi$ be a $C^2$ function in a neighborhood of $x$. We have
\begin{align*}
\dashint_{B_\varepsilon(x)}\phi(y+\varepsilon\nabla a(x))-\phi(y-\varepsilon\nabla a(x))dy=2\varepsilon\langle\nabla\phi(x),\nabla a(x)\rangle+o(\varepsilon^2)
\end{align*}
as $\varepsilon\rightarrow 0$.
\end{lemma}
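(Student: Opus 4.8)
\textit{Proof proposal.} The plan is to Taylor expand the integrand pointwise in $y$ and then average. Fix $x$ and abbreviate the constant vector $b:=\nabla a(x)$. Since $\phi\in C^2$ on a fixed neighborhood $U$ of $x$, for $y$ in a slightly smaller ball and for $\varepsilon$ small enough that the whole segments from $y$ to $y\pm\varepsilon b$ lie in $U$, the second-order Taylor formula gives
$$
\phi(y\pm\varepsilon b)=\phi(y)\pm\varepsilon\langle\nabla\phi(y),b\rangle+\tfrac{\varepsilon^2}{2}\langle D^2\phi(y)b,b\rangle+R_{\pm}(y,\varepsilon),
$$
with remainder $R_{\pm}(y,\varepsilon)=\int_0^1(1-s)\langle[D^2\phi(y\pm s\varepsilon b)-D^2\phi(y)](\pm\varepsilon b),(\pm\varepsilon b)\rangle\,ds$. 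By the uniform continuity of $D^2\phi$ on a compact subneighborhood of $x$, one gets $|R_{\pm}(y,\varepsilon)|\le\tfrac12\varepsilon^2|b|^2\,\omega(\varepsilon|b|)$, where $\omega$ is the modulus of continuity of $D^2\phi$; hence $\sup_{y}|R_{\pm}(y,\varepsilon)|=o(\varepsilon^2)$ as $\varepsilon\to0$. Subtracting the two expansions, the zeroth- and second-order terms cancel because they are even in $\varepsilon b$, and we obtain
$$
\phi(y+\varepsilon b)-\phi(y-\varepsilon b)=2\varepsilon\langle\nabla\phi(y),b\rangle+o(\varepsilon^2)
$$
uniformly for $y\in B_\varepsilon(x)$.

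Next I would average this identity over $B_\varepsilon(x)$. The uniform $o(\varepsilon^2)$ error survives the averaging, so
$$
\dashint_{B_\varepsilon(x)}\bigl(\phi(y+\varepsilon b)-\phi(y-\varepsilon b)\bigr)\,dy=2\varepsilon\dashint_{B_\varepsilon(x)}\langle\nabla\phi(y),b\rangle\,dy+o(\varepsilon^2).
$$
It then remains to check that $\dashint_{B_\varepsilon(x)}\langle\nabla\phi(y),b\rangle\,dy=\langle\nabla\phi(x),b\rangle+o(\varepsilon)$, since multiplying this by $2\varepsilon$ turns the $o(\varepsilon)$ into the desired $o(\varepsilon^2)$. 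This follows by writing $\nabla\phi(y)=\nabla\phi(x)+D^2\phi(x)(y-x)+o(|y-x|)$ (again using $\phi\in C^2$) and noting that the linear term integrates to zero over the symmetric ball $B_\varepsilon(x)$, while the remainder is $o(\varepsilon)$ uniformly on $B_\varepsilon(x)$. Alternatively, mere continuity of $\nabla\phi$ at $x$ already gives convergence of the average, i.e. an $o(1)$ error, which is enough after multiplication by $\varepsilon$; I would keep the sharper $o(\varepsilon)$ rate simply because it makes the bookkeeping transparent. Combining the two displays yields the claimed formula.

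There is no serious obstacle here; the only point demanding a little care is the \emph{uniformity} of the Taylor remainder over the shrinking ball $B_\varepsilon(x)$ and over the shifted arguments $y\pm\varepsilon b$, which is precisely what the $C^2$ regularity of $\phi$ on a fixed neighborhood provides via uniform continuity of $D^2\phi$. This lemma supplies exactly the term $\langle\nabla a,\nabla\phi\rangle$ missing from Lemmas \ref{lem1} and \ref{lem3}, which is what is needed to match the extra first-order term in the reformulated equation \eqref{2-3}.
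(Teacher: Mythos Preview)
Your main argument is correct and reaches the conclusion, but by a slightly different route than the paper. The paper first changes variables $y=x+\varepsilon z$, $z\in B(0,1)$, and then Taylor-expands $\phi\bigl(x+(\nabla a(x)\pm z)\varepsilon\bigr)$ to second order \emph{about the fixed center $x$}. In that expansion the second-order contributions from the $+$ and $-$ shifts are identical and cancel upon subtraction, while the first-order cross terms in $z$ vanish by odd symmetry of the unit ball, leaving $2\varepsilon\langle\nabla\phi(x),\nabla a(x)\rangle+o(\varepsilon^2)$ in one stroke. You instead expand about the variable point $y$, average, and then perform a second expansion of $\nabla\phi(y)$ about $x$; this is valid but costs an extra step and an extra appeal to uniform remainder control.

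One genuine slip: your ``alternative'' remark is \emph{not} sufficient. Mere continuity of $\nabla\phi$ at $x$ gives only
\[
\dashint_{B_\varepsilon(x)}\langle\nabla\phi(y),b\rangle\,dy=\langle\nabla\phi(x),b\rangle+o(1),
\]
and multiplying by $2\varepsilon$ yields an $o(\varepsilon)$ error, not the $o(\varepsilon^2)$ the lemma asserts. You really do need the sharper $o(\varepsilon)$ rate on the gradient average---which you correctly obtain from $\phi\in C^2$ together with the symmetry cancellation of the linear term $D^2\phi(x)(y-x)$---so that after multiplication by $2\varepsilon$ the error becomes $o(\varepsilon^2)$. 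Delete the alternative remark; the main line of the argument then stands as written.
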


\begin{proof} Observe that 
\begin{align*}
&\quad \dashint_{B_\varepsilon(x)}\phi(y+\varepsilon\nabla a(x))dy\\
&=\dashint_{B(0,1)}\phi(x+(\nabla a(x)+z)\varepsilon)dz\\
&=\phi(x)+\varepsilon\langle\nabla\phi(x),\nabla a(x) \rangle+\frac{\varepsilon^2}{2}\dashint_{B(0,1)}\langle D^2\phi(x)\nabla a(x),\nabla a(x)\rangle+\langle D^2\phi(x) z,z\rangle dz\\
&\quad+o(\varepsilon^2)
\end{align*}
and 
\begin{align*}
&\quad \dashint_{B_\varepsilon(x)}\phi(y-\varepsilon\nabla a(x))dy\\
&=\dashint_{B(0,1)}\phi(x+(z-\nabla a(x))\varepsilon)dz\\
&=\phi(x)-\varepsilon\langle\nabla\phi(x),\nabla a(x) \rangle+\frac{\varepsilon^2}{2}\dashint_{B(0,1)}\langle D^2\phi(x)\nabla a(x),\nabla a(x)\rangle+\langle D^2\phi(x) z,z\rangle dz\\
&\quad+o(\varepsilon^2).
\end{align*}
Thus, we obtain
$$
\dashint_{B_\varepsilon(x)}\phi(y+\varepsilon\nabla a(x))-\phi(y-\varepsilon\nabla a(x))dy=2\varepsilon\langle\nabla\phi(x),\nabla a(x)\rangle+o(\varepsilon^2) \quad \text{as }\varepsilon\rightarrow 0. 
$$
This finishes the proof.
\end{proof}

\section{Elliptic case}
\noindent
\par
\label{sec3}

In this section, we will prove Theorem \ref{thm1} and consider several special cases as corollaries. Then we apply the ideas to the $p(x)$-Laplace  type equations and give the corresponding conclusions.

\begin{proof} [Proof of Theorem \ref{thm1}]
Considering the sufficiency, we need to show that $u$ is a viscosity solution to Eq. \eqref{1-1} by $u$ satisfying the asymptotic mean value formula. We first prove that $u$ is a viscosity supersolution. To be precise, we intend to prove \eqref{2-4} from \eqref{2-1}.

For the case that $p>2$, we know from \eqref{1-5} that $\alpha_p>0$ and $\alpha_q>0$.
Suppose that the function $u$ satisfies the asymptotic mean value formula in the viscosity sense. Recalling \eqref{2-1}, we have
\begin{align*}
0&\geq -\phi(x)+\dfrac{\alpha_p+M_{\phi}(x)\alpha_q}{2(1+M_{\phi}(x))}\left\{\mathop{\rm{max}}\limits_{\overline{B_\varepsilon(x)}}\phi+\mathop{\rm{min}}\limits_{\overline{B_\varepsilon(x)}}\phi\right\}+\dfrac{\beta_p+M_{\phi}(x)\beta_q}{1+M_{\phi}(x)}\dashint_{B_\varepsilon(x)}\phi(y)dy\\
&\quad+\dfrac{\varepsilon\lvert\nabla \phi(x) \rvert^{q-p}}{4(N+p)(1+M_{\phi}(x))}\dashint_{B_\varepsilon(x)}\phi(y+\varepsilon\nabla a(x))-\phi(y-\varepsilon\nabla a(x))dy+o(\varepsilon^2).
\end{align*}
From $\alpha_p+\beta_p=1, \alpha_q+\beta_q=1$, we write
\begin{align*}
0&\geq\dfrac{\alpha_p+M_{\phi}(x)\alpha_q}{1+M_{\phi}(x)}\Rmnum{1}+\dfrac{\beta_p+M_{\phi}(x)\beta_q}{1+M_{\phi}(x)}\Rmnum{2}+\dfrac{\varepsilon\lvert\nabla \phi(x) \rvert^{q-p}}{4(N+p)(1+M_{\phi}(x))}\Rmnum{3}+o(\varepsilon^2),
\end{align*}
where 
\begin{align*}
&\Rmnum{1}=-\phi(x)+\frac{1}{2}\left\{\mathop{\rm{max}}\limits_{\overline{B_\varepsilon(x)}}\phi+\mathop{\rm{min}}\limits_{\overline{B_\varepsilon(x)}}\phi\right\}, \\
&\Rmnum{2}=-\phi(x)+\dashint_{B_\varepsilon(x)}\phi(y)dy, \\
&\Rmnum{3}=\dashint_{B_\varepsilon(x)}\phi(y+\varepsilon\nabla a(x))-\phi(y-\varepsilon\nabla a(x))dy.
\end{align*}

The non-negativity of $M_{\phi}(x)$ implies that
\begin{align*}
0&\geq \alpha_p\Rmnum{1}+\beta_p\Rmnum{2}+M_{\phi}(x)\left(\alpha_q\Rmnum{1}+\beta_q\Rmnum{2}\right)+\dfrac{\varepsilon\lvert\nabla \phi(x) \rvert^{q-p}}{4(N+p)}\Rmnum{3}+o(\varepsilon^2).
\end{align*}
It follows from Lemmas \ref{lem1}, \ref{lem3} and \ref{lem4} that
\begin{align*}
0&\geq \frac{\alpha_p}{2}\left< D^2\phi(x)(x^\varepsilon_1-x),(x^\varepsilon_1-x) \right>+\dfrac{\varepsilon^2\beta_p}{2(N+2)}\Delta\phi(x) \\
&\quad+M_{\phi}(x)\Bigg(\frac{\alpha_q}{2}\langle D^2\phi(x)(x^\varepsilon_1-x),(x^\varepsilon_1-x) \rangle
+\left.\dfrac{\varepsilon^2\beta_q}{2(N+2)}\Delta\phi(x)\right)\\
&\quad+\dfrac{\varepsilon^2\lvert\nabla \phi(x) \rvert^{q-p}}{2(N+p)}\left<\nabla\phi(x),\nabla a(x)\right>+o(\varepsilon^2).
\end{align*}
Dividing by $\frac{\varepsilon^2}{2}$, taking the limit as $\varepsilon\rightarrow0$ and by Lemma \ref{lem2}, we have
\begin{align*}
0&\geq\alpha_p\Delta_{\infty}\phi(x)+\dfrac{\beta_p}{N+2}\Delta\phi(x)+M_{\phi}(x)\left(\alpha_q\Delta_{\infty}\phi(x)+\dfrac{\beta_q}{N+2}\Delta\phi(x)\right)\\
&\quad+\dfrac{\lvert\nabla \phi(x) \rvert^{q-p}}{N+p}\langle\nabla\phi(x),\nabla a(x)\rangle.
\end{align*}
Multipling by $N+p$, we get
\begin{align*}
0&\geq(p-2)\Delta_{\infty}\phi(x)+\Delta \phi(x)+a(x)\lvert \nabla \phi(x) \rvert^{q-p}((q-2)\Delta_{\infty}\phi(x)+\Delta \phi(x))\\
&\quad+\lvert \nabla \phi(x) \rvert^{q-p}\langle \nabla a(x), \nabla \phi(x)\rangle.
\end{align*}

Therefore, $u$ is a viscosity supersolution according to \eqref{2-4}. We can use \eqref{2-7} instead of \eqref{2-6} to prove that $u$ is a viscosity subsolution and we omit the proof.

For the necessity of the theorem, we need to prove that $u$ satisfies the asymptotic mean value formula in the viscosity sense if $u$ is a viscosity solution to Eq. \eqref{1-1}. Assume that $u$ is a viscosity solution to Eq. \eqref{1-1}.  In particular, $u$ is a viscosity subsolution.  From \eqref{2-5}, we have
\begin{align*}
0&\leq(p-2)\Delta_{\infty}\phi(x)+\Delta \phi(x)+a(x)\lvert \nabla \phi(x) \rvert^{q-p}((q-2)\Delta_{\infty}\phi(x)+\Delta \phi(x))\\
&\quad+\lvert \nabla \phi(x) \rvert^{q-p}\left< \nabla a(x), \nabla \phi(x)\right>.
\end{align*}
By Lemma \ref{lem2},
\begin{align*}
0&\leq(p-2)\left< D^2\phi(x)\left(\dfrac{x^\varepsilon_1-x}{\varepsilon}\right),\left(\dfrac{x^\varepsilon_1-x}{\varepsilon}\right) \right>+\Delta\phi(x)\\
&\quad+a(x)\lvert\nabla \phi(x)\vert^{q-p}\left((q-2)\left< D^2\phi(x)\left(\dfrac{x^\varepsilon_1-x}{\varepsilon}\right)\right.\right.
,\left(\dfrac{x^\varepsilon_1-x}{\varepsilon}\right)\bigg>+\Delta\phi(x)\bigg)\\
&\quad+\lvert\nabla \phi(x)\vert^{q-p}\langle\nabla a(x),\nabla\phi(x)\rangle+o(1).
\end{align*}
Multipling by $\varepsilon^2$ on the inequality above, we get
\begin{align*}
0&\leq(p-2)\langle D^2\phi(x)\left(x^\varepsilon_1-x\right),\left(x^\varepsilon_1-x\right) \rangle+\varepsilon^2\Delta\phi(x)\\
&\quad+a(x)\lvert\nabla \phi(x)\vert^{q-p}\left((q-2)\langle D^2\phi(x)\left(x^\varepsilon_1-x\right),\left(x^\varepsilon_1-x\right)\rangle+\varepsilon^2\Delta\phi(x) \right)\\
&\quad+\varepsilon^2\lvert\nabla \phi(x)\vert^{q-p}\langle\nabla a(x),\nabla\phi(x)\rangle+o(\varepsilon^2).
\end{align*}
By Lemmas \ref{lem1}, \ref{lem3} and  \ref{lem4}, we have 
\begin{align*}
0&\leq 2(p-2)\Rmnum{1}+2(N+2)\Rmnum{2}+a(x)\lvert\nabla \phi(x)\vert^{q-p}(2(q-2)\Rmnum{1}+2(N+2)\Rmnum{2})\\
&\quad+\frac{\varepsilon}{2}\lvert\nabla \phi(x)\vert^{q-p}\Rmnum{3}+o(\varepsilon^2).
\end{align*}
Furthermore, dividing by $2(N+p)$, we obtain
\begin{align*}
0&\leq\left(\alpha_p\Rmnum{1}+\beta_p\Rmnum{2}\right)+a(x)\dfrac{N+q}{N+p}\lvert\nabla \phi(x)\vert^{q-p}\left(\alpha_q\Rmnum{1}+\beta_q\Rmnum{2}\right)+\dfrac{\varepsilon\lvert\nabla \phi(x) \rvert^{q-p}}{4(N+p)}\Rmnum{3}+o(\varepsilon^2).
\end{align*} 
Then separating $\phi(x)$ from $\Rmnum{1}$ and $\Rmnum{2}$, we get
\begin{align*}
(1+M_{\phi}(x))\phi(x)&\leq\dfrac{\alpha_p+M_{\phi}(x)\alpha_q}{2}\left\{\mathop{\rm{max}}\limits_{\overline{B_\varepsilon(x)}}\phi+\mathop{\rm{min}}\limits_{\overline{B_\varepsilon(x)}}\phi\right\}\\
&\quad+\left(\beta_p+M_{\phi}(x)\beta_q\right)\dashint_{B_\varepsilon(x)}\phi(y)dy\\
&\quad+\dfrac{\varepsilon\lvert\nabla \phi(x) \rvert^{q-p}}{4(N+p)}\dashint_{B_\varepsilon(x)}\phi(y+\varepsilon\nabla a(x))-\phi(y-\varepsilon\nabla a(x))dy+o(\varepsilon^2).
\end{align*}
Thus
\begin{align*}
0&\leq -\phi(x)+\dfrac{\alpha_p+M_{\phi}(x)\alpha_q}{2(1+M_{\phi}(x))}\left\{\mathop{\rm{max}}\limits_{\overline{B_\varepsilon(x)}}\phi+\mathop{\rm{min}}\limits_{\overline{B_\varepsilon(x)}}\phi\right\}+\dfrac{\beta_p+M_{\phi}(x)\beta_q}{1+M_{\phi}(x)}\dashint_{B_\varepsilon(x)}\phi(y)dy\\
&\quad+\dfrac{\varepsilon\lvert\nabla \phi(x) \rvert^{q-p}}{4(N+p)(1+M_{\phi}(x))}\dashint_{B_\varepsilon(x)}\phi(y+\varepsilon\nabla a(x))-\phi(y-\varepsilon\nabla a(x))dy+o(\varepsilon^2).
\end{align*} 
Similarly, we can also prove that $u$ satisfies \eqref{2-2} if $u$ is the viscosity supersolution.

When $1<p\leq2$, we can divide it into the following cases: $p=2, q>2$; $p=2, q=2$; $1<p<2, q=2$; $1<p<2, q>2$; $1<q<2$.
The proofs of these cases are similar to the case that $p>2$, by using \eqref{2-7} instead of \eqref{2-6} in Lemma \ref{lem1}  if necessary. 

Combining the arguments above, we complete the proof. 
\end{proof}

From the proof of Theorem \ref{thm1}, the following corollaries will follow.

\begin{corollary}[$p$-Laplace equation]\label{cor1}
Let $1<p<\infty$ and $u(x)$ be a continuous function in a domain $\Omega\subset \mathbb{R}^N$. The equation
$$-\text{\rm{div}}(\lvert \nabla u \rvert^{p-2}\nabla u)=0 \quad \text{in } \Omega$$
holds in the viscosity sense if and only if the asymptotic expansion 
\begin{align*}
u(x)&=\dfrac{\alpha_p}{2}\left\{\mathop{\rm{max}}\limits_{\overline{B_\varepsilon(x)}}u+\mathop{\rm{min}}\limits_{\overline{B_\varepsilon(x)}}u\right\}+\beta_p\dashint_{B_\varepsilon(x)}u(y)dy+o(\varepsilon^2) \quad \text{as }\varepsilon\rightarrow 0
\end{align*}
holds for all $x\in \Omega$ in the viscosity sense. Here $\alpha_p+\beta_p=1,\frac{\alpha_p}{\beta_p}=\frac{p-2}{N+2}.$
\end{corollary}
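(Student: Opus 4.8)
The plan is to obtain Corollary \ref{cor1} directly as the special case $a\equiv 0$ of Theorem \ref{thm1} (equivalently, one may repeat the proof of Theorem \ref{thm1} verbatim after deleting every term that carries $a$, $\nabla a$, or $q$). Since the constant function $a(x)\equiv 0$ is non-negative and of class $C^1$ on $\Omega$, the hypotheses of Theorem \ref{thm1} are satisfied for any exponent $q\in[p,\infty)$; for definiteness take $q=p$. It then only remains to check that, under this choice, both sides of the equivalence in Theorem \ref{thm1} collapse onto the corresponding objects in Corollary \ref{cor1}.

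First I would observe that equation \eqref{1-1} with $a\equiv 0$ is precisely the $p$-Laplace equation $-\dive(|\nabla u|^{p-2}\nabla u)=0$, and that Definition \ref{def2} then reduces to the standard game-theoretic viscosity formulation of the $p$-Laplacian: setting $a\equiv 0$, hence $\nabla a\equiv 0$, in \eqref{2-4}--\eqref{2-5} leaves exactly $-(p-2)\Delta_{\infty}\phi(x)-\Delta\phi(x)\geq 0$ for supersolutions and the reversed inequality for subsolutions, which is the well-known viscosity notion for $-\Delta_p u=0$.

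Next I would substitute $a\equiv 0$ into the quantities \eqref{1-5}. One gets $M_u(x)=a(x)\tfrac{N+q}{N+p}|\nabla u(x)|^{q-p}\equiv 0$, so the coefficients $\tfrac{\alpha_p+M_u(x)\alpha_q}{2(1+M_u(x))}$ and $\tfrac{\beta_p+M_u(x)\beta_q}{1+M_u(x)}$ appearing in \eqref{1-4} simplify to $\tfrac{\alpha_p}{2}$ and $\beta_p$, respectively. Moreover, since $\nabla a\equiv 0$, the anisotropic third term of \eqref{1-4}, namely $\tfrac{\varepsilon|\nabla u(x)|^{q-p}}{4(N+p)(1+M_u(x))}\dashint_{B_\varepsilon(x)}\bigl(u(y+\varepsilon\nabla a(x))-u(y-\varepsilon\nabla a(x))\bigr)\,dy$, vanishes identically. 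Hence \eqref{1-4} becomes exactly the asymptotic expansion claimed in Corollary \ref{cor1}, and the same substitution applied to Definition \ref{def1} shows that the two ``in the viscosity sense'' clauses coincide. Invoking Theorem \ref{thm1} then gives the corollary.

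There is no genuine analytic obstacle here; the only point needing a word of care is a bookkeeping one: Theorem \ref{thm1} formally carries the second exponent $q$, whereas Corollary \ref{cor1} does not mention it. One must simply note that when $a\equiv 0$ the equation, the viscosity notion, and the asymptotic expansion are all independent of $q$, so the choice $q=p$ (for which $\alpha_q=\alpha_p$ and $\beta_q=\beta_p$) is harmless and recovers the stated formula verbatim.
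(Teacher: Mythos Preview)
Your proposal is correct and matches the paper's approach: the paper presents this corollary immediately after Theorem \ref{thm1} with only the lead-in ``From the proof of Theorem \ref{thm1}, the following corollaries will follow,'' giving no separate argument. Your specialization $a\equiv 0$ (with the bookkeeping remark on $q$) is exactly what is intended, and your observation that one could equivalently rerun the proof with the $a$-terms deleted mirrors the paper's phrasing.
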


\begin{remark}
Corollary \ref{cor1} is the main result in \cite{MPR1}. In fact, Corollary \ref{cor1} also holds for $p=\infty$ with $\alpha_p=1$, $\beta_p=0$.
\end{remark}

\begin{corollary}[Variable coefficient $p$-Laplace equation]\label{cor2}
Let $1<p<\infty$, $\tilde a(x)$ be a $C^1$ function in a domain $\Omega \subset \mathbb{R}^N$ with $\tilde a(x)\geq 1$ and let $u(x)$ be a continuous function in $\Omega$. The equation 
\begin{align*}
-\text{\rm{div}}(\tilde a(x)\lvert \nabla u \rvert^{p-2}\nabla u)=0 \quad \text{in } \Omega
\end{align*}
holds in the viscosity sense if and only if the asymptotic expansion 
\begin{align*}
u(x)&=\dfrac{\alpha_p}{2}\left\{\mathop{\rm{max}}\limits_{\overline{B_\varepsilon(x)}}u+\mathop{\rm{min}}\limits_{\overline{B_\varepsilon(x)}}u\right\}+\beta_p\dashint_{B_\varepsilon(x)}u(y)dy\\
&\quad+\dfrac{\varepsilon}{4(N+p)\tilde a(x)}\dashint_{B_\varepsilon(x)}u(y+\varepsilon\nabla \tilde a(x))-u(y-\varepsilon\nabla \tilde  a(x))dy+o(\varepsilon^2)
\end{align*}
as $\varepsilon\rightarrow0$, holds for all $x\in \Omega$ in the viscosity sense. Here $\alpha_p+\beta_p=1,\frac{\alpha_p}{\beta_p}=\frac{p-2}{N+2}$.
\end{corollary}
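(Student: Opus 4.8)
The plan is to recognize Corollary~\ref{cor2} as the special case $q=p$ of Theorem~\ref{thm1}, together with a short check that the two notions of viscosity solution coincide. The starting observation is the algebraic identity
$$-\dive(\tilde a(x)\lvert\nabla u\rvert^{p-2}\nabla u)=-\dive\bigl(\lvert\nabla u\rvert^{p-2}\nabla u+(\tilde a(x)-1)\lvert\nabla u\rvert^{p-2}\nabla u\bigr),$$
so the variable coefficient $p$-Laplace equation is exactly Eq.~\eqref{1-1} with exponents $p=q$ and coefficient $a(x):=\tilde a(x)-1$. Since $\tilde a$ is $C^1$ with $\tilde a\ge 1$, the function $a$ is a non-negative $C^1$ function in $\Omega$, and $\nabla a\equiv\nabla\tilde a$, so Theorem~\ref{thm1} is directly applicable.

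The second step is to specialize the constants in \eqref{1-4}--\eqref{1-5} to $q=p$. Then $\alpha_q=\alpha_p$, $\beta_q=\beta_p$, $\lvert\nabla u\rvert^{q-p}\equiv1$, and
$$M_u(x)=a(x)\,\frac{N+p}{N+p}\,\lvert\nabla u(x)\rvert^{0}=a(x)=\tilde a(x)-1,\qquad\text{hence}\qquad 1+M_u(x)=\tilde a(x).$$
Substituting into \eqref{1-4}: the coefficient of $\frac12\{\max+\min\}$ becomes $\frac{\alpha_p+M_u\alpha_q}{2(1+M_u)}=\frac{\alpha_p(1+M_u)}{2(1+M_u)}=\frac{\alpha_p}{2}$, the coefficient of the average becomes $\frac{\beta_p+M_u\beta_q}{1+M_u}=\beta_p$, and the third term becomes $\frac{\varepsilon}{4(N+p)\tilde a(x)}\dashint_{B_\varepsilon(x)}\bigl(u(y+\varepsilon\nabla\tilde a(x))-u(y-\varepsilon\nabla\tilde a(x))\bigr)\,dy$, which is precisely the claimed expansion. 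One must also verify that ``$u$ is a viscosity solution of $-\dive(\tilde a\lvert\nabla u\rvert^{p-2}\nabla u)=0$'' and ``$u$ is a viscosity solution of \eqref{1-1} with $q=p$, $a=\tilde a-1$'' mean the same thing: for a test function $\phi$ with $\nabla\phi(x)\ne0$, expanding $-\dive(\tilde a\lvert\nabla\phi\rvert^{p-2}\nabla\phi)(x)$ shows it equals $\tilde a(x)\lvert\nabla\phi(x)\rvert^{p-2}$ times the quantity appearing in \eqref{2-4}--\eqref{2-5}; since this factor is strictly positive, the two inequalities are equivalent and the definitions agree.

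An alternative route, which is what ``from the proof of Theorem~\ref{thm1}'' alludes to, is to repeat that proof verbatim with all $q$-terms deleted: starting from the viscosity mean value inequality, one expands the max--min term, the average, and the symmetric difference via Lemmas~\ref{lem1},~\ref{lem3},~\ref{lem4}, divides by $\varepsilon^2/2$, passes to the limit using Lemma~\ref{lem2}, multiplies by $N+p$, and divides by $\tilde a(x)$ to reach $(p-2)\Delta_{\infty}\phi(x)+\Delta\phi(x)+\tilde a(x)^{-1}\langle\nabla\tilde a(x),\nabla\phi(x)\rangle$ with the correct sign; multiplying back by $\tilde a(x)\lvert\nabla\phi(x)\rvert^{p-2}>0$ recovers Definition~\ref{def2}, and the necessity direction simply reverses these steps.

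The only point that needs attention, exactly as in Theorem~\ref{thm1}, is the sign of $p-2$ (equivalently of $\alpha_p$): for $p>2$ one uses the minimum point $x_1^\varepsilon$ and inequality \eqref{2-6} to preserve the direction of the inequality, while for $1<p\le2$ one uses instead the maximum point $x_2^\varepsilon$ and inequality \eqref{2-7}; since $\langle D^2\phi(x)v,v\rangle$ yields the same value $\Delta_{\infty}\phi(x)$ for $v=\pm\nabla\phi(x)/\lvert\nabla\phi(x)\rvert$, this case distinction leaves the limit unchanged. Apart from this bookkeeping there is no genuine obstacle, and the statement follows as a direct corollary of Theorem~\ref{thm1}.
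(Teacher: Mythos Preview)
Your proposal is correct and follows essentially the same approach as the paper: set $q=p$ and $a(x)=\tilde a(x)-1$ in Theorem~\ref{thm1}, then simplify \eqref{1-4} using $\alpha_q=\alpha_p$, $\beta_q=\beta_p$, $M_u(x)=a(x)$, and $1+M_u(x)=\tilde a(x)$. Your write-up is in fact slightly more careful than the paper's, since you explicitly verify that the viscosity-solution notions for $-\dive(\tilde a|\nabla u|^{p-2}\nabla u)=0$ and for \eqref{1-1} with $q=p$, $a=\tilde a-1$ coincide, and you record the $p\gtrless 2$ bookkeeping; the paper leaves both of these implicit.
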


\begin{proof}  When $q=p$ in Eq.  \eqref{1-1}, we have
$$-\text{\rm{div}}((a(x)+1)\lvert \nabla u \rvert^{p-2}\nabla u)=0.$$
In this situation, we have 
$$
\alpha_p=\alpha_q, \quad \beta_p=\beta_q, \quad M_u(x)=a(x).
$$
Thus, the asymptotic mean value formula \eqref{1-4} reads as 
\begin{align*}
u(x)&=\dfrac{\alpha_p+M_u(x)\alpha_q}{2(1+M_u(x))}\left\{\mathop{\rm{max}}\limits_{\overline{B_\varepsilon(x)}}u+\mathop{\rm{min}}\limits_{\overline{B_\varepsilon(x)}}u\right\}+\dfrac{\beta_p+M_u(x)\beta_q}{1+M_u(x)}\dashint_{B_\varepsilon(x)}u(y)dy\\
&\quad+\dfrac{\varepsilon\lvert\nabla u(x) \rvert^{q-p}}{4(N+p)(1+M_u(x))}\dashint_{B_\varepsilon(x)}u(y+\varepsilon\nabla a(x))-u(y-\varepsilon\nabla a(x))dy+o(\varepsilon^2)\\
&=\dfrac{\alpha_p}{2}\left\{\mathop{\rm{max}}\limits_{\overline{B_\varepsilon(x)}}u+\mathop{\rm{min}}\limits_{\overline{B_\varepsilon(x)}}u\right\}+\beta_p\dashint_{B_\varepsilon(x)}u(y)dy\\
&\quad+\dfrac{\varepsilon}{4(N+p)(1+a(x))}\dashint_{B_\varepsilon(x)}u(y+\varepsilon\nabla a(x))-u(y-\varepsilon\nabla a(x))dy+o(\varepsilon^2).
\end{align*}
Let $\tilde{a}(x)=a(x)+1$. We finish the proof.
\end{proof}

\begin{remark}
In fact, the condition $\tilde a(x)\geq1$ can be replaced by $\tilde a(x)>0$ and we can prove the conclusion by the same method as in Theorem \ref{thm1}.
\end{remark}

Finally, we consider the $p(x)$-Laplace equation
\begin{align}\label{3-1}
-\text{\rm{div}}(\lvert\nabla u\rvert^{p(x)-2}\nabla u)=0 \quad \text{in } \Omega.
\end{align} 
Let us formally expand the left-hand side of Eq. \eqref{3-1} as follows:
\begin{align*}
&\quad \text{\rm{div}}(\lvert\nabla u\rvert^{p(x)-2}\nabla u)\\
&=\sum_{i=1}^{N}\partial_i(\lvert\nabla u\rvert^{p(x)-2}u_i)\\
&=\sum_{i=1}^{N}\partial_i(\lvert\nabla u\rvert^{p(x)-2})u_i+\lvert\nabla u\rvert^{p(x)-2}\Delta u\\
&=\sum_{i=1}^{N}\partial_i(e^{(p(x)-2)\ln\lvert\nabla u\rvert})u_i+\lvert\nabla u\rvert^{p(x)-2}\Delta u \\
&=\sum_{i=1}^{N}\lvert\nabla u\rvert^{p(x)-2}(\ln\lvert\nabla u\rvert\partial_ip+(p(x)-2)\partial_i\ln\lvert\nabla u\rvert)u_i+\lvert\nabla u\rvert^{p(x)-2}\Delta u\\
&=\sum_{i=1}^{N}\lvert\nabla u\rvert^{p(x)-2}(\ln\lvert\nabla u\rvert p_i+(p(x)-2)\lvert\nabla u\rvert^{-2} \sum_{j=1}^{N}u_{ij}u_j)u_i+\lvert\nabla u\rvert^{p(x)-2}\Delta u\\
&=\lvert\nabla u\rvert^{p(x)-2}\left(\ln\lvert\nabla u\rvert\langle\nabla p,\nabla u\rangle+(p(x)-2)\Delta_{\infty}u+\Delta u) \right).
\end{align*}

Suppose that $u$ is a smooth function with $\nabla u \neq0$, we can see that $u$ is a solution to Eq.  \eqref{3-1} if and only if
$$-(p(x)-2)\Delta_{\infty}u-\Delta u-\ln\lvert\nabla u\rvert\langle\nabla p,\nabla u\rangle=0.$$ 

We find that the term $\langle\nabla p,\nabla u\rangle$ appears in the equation above. We can still use Lemma \ref{lem4} to obtain the asymptotic mean value formula by the same method as in Theorem \ref{thm1}, which is given as the following theorem without proof.

\begin{theorem} [$p(x)$-Laplace equation] \label{thm3}
	
Let $p(x)$ be a $C^1$ function in a domain $\Omega \subset \mathbb{R}^N$ with $1<p(x)<\infty $ and $u(x)$ be a continuous function in $\Omega$. Then Eq. \eqref{3-1} holds in the viscosity sense if and only if the asymptotic expansion 
\begin{align*}
u(x)&=\dfrac{\alpha_p(x)}{2}\left\{\mathop{\rm{max}}\limits_{\overline{B_\varepsilon(x)}}u+\mathop{\rm{min}}\limits_{\overline{B_\varepsilon(x)}}u\right\}+\beta_p(x)\dashint_{B_\varepsilon(x)}u(y)dy\\
&\quad+\dfrac{\varepsilon\ln\lvert\nabla u(x)\rvert}{4(N+p(x))}\dashint_{B_\varepsilon(x)}u(y+\varepsilon\nabla p(x))-u(y-\varepsilon\nabla p(x))dy+o(\varepsilon^2)
\end{align*}
as $\varepsilon\rightarrow0$, holds for all $x\in \Omega$ in the viscosity sense. Here $\alpha_p(x)+\beta_p(x)=1, \frac{\alpha_p(x)}{\beta_p(x)}=\frac{p(x)-2}{N+2}$.
\end{theorem}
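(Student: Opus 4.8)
The plan is to imitate the proof of Theorem \ref{thm1}, with the term $\langle\nabla a,\nabla u\rangle$ there replaced throughout by $\ln|\nabla u|\,\langle\nabla p,\nabla u\rangle$ and with every quantity read pointwise, so that at the point $x$ where a test function touches $u$ the fixed number $p(x)$ and the fixed vector $\nabla p(x)$ play the role that the constants $p,q$ and the vector $\nabla a(x)$ played before. First I would fix the natural analogues of Definitions \ref{def1} and \ref{def2}: a lower semicontinuous $u$ is a viscosity supersolution of \eqref{3-1} if for every $\phi\in C^2$ such that $u-\phi$ has a strict minimum at $x$ with $\nabla\phi(x)\neq0$ one has $-(p(x)-2)\Delta_\infty\phi(x)-\Delta\phi(x)-\ln|\nabla\phi(x)|\,\langle\nabla p(x),\nabla\phi(x)\rangle\geq0$, with the reversed inequality at a strict maximum for subsolutions, and the expansion in the statement is understood in the viscosity sense with test functions exactly as in Definition \ref{def1}. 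I would also record once and for all that $\alpha_p(x)+\beta_p(x)=1$ together with $\alpha_p(x)/\beta_p(x)=(p(x)-2)/(N+2)$ force $\alpha_p(x)=\frac{p(x)-2}{N+p(x)}$ and $\beta_p(x)=\frac{N+2}{N+p(x)}$.

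\emph{From the mean value formula to the equation.} Fixing $x$ and a test function $\phi$ touching $u$ from below at $x$ with $\nabla\phi(x)\neq0$, I abbreviate $\mathrm{I}=-\phi(x)+\tfrac12\{\max_{\overline{B_\varepsilon(x)}}\phi+\min_{\overline{B_\varepsilon(x)}}\phi\}$, $\mathrm{II}=-\phi(x)+\dashint_{B_\varepsilon(x)}\phi\,dy$, $\mathrm{III}=\dashint_{B_\varepsilon(x)}\big(\phi(y+\varepsilon\nabla p(x))-\phi(y-\varepsilon\nabla p(x))\big)\,dy$; using $\alpha_p(x)+\beta_p(x)=1$ the viscosity mean value inequality reads $0\geq\alpha_p(x)\,\mathrm{I}+\beta_p(x)\,\mathrm{II}+\frac{\varepsilon\ln|\nabla\phi(x)|}{4(N+p(x))}\,\mathrm{III}+o(\varepsilon^2)$. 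Then I would feed in Lemma \ref{lem1} (inequality \eqref{2-6}, valid because $\alpha_p(x)>0$ when $p(x)>2$, and \eqref{2-7} with the maximum point $x_2^\varepsilon$ when $p(x)\leq2$), Lemma \ref{lem3}, and Lemma \ref{lem4} applied with $\nabla a(x)$ replaced by the constant vector $\nabla p(x)$; divide by $\varepsilon^2/2$; let $\varepsilon\to0$ with Lemma \ref{lem2}; and arrive at $0\geq\alpha_p(x)\Delta_\infty\phi(x)+\frac{\beta_p(x)}{N+2}\Delta\phi(x)+\frac{\ln|\nabla\phi(x)|}{N+p(x)}\langle\nabla p(x),\nabla\phi(x)\rangle$. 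Multiplying by $N+p(x)$ and substituting the explicit $\alpha_p(x),\beta_p(x)$ gives the supersolution inequality; the subsolution inequality is obtained symmetrically from test functions touching from above.

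\emph{From the equation to the mean value formula.} Conversely, if $u$ is a viscosity solution and $\phi$ touches $u$ from above at $x$ with $\nabla\phi(x)\neq0$, then $0\leq(p(x)-2)\Delta_\infty\phi(x)+\Delta\phi(x)+\ln|\nabla\phi(x)|\langle\nabla p(x),\nabla\phi(x)\rangle$. I would use Lemma \ref{lem2} to write $\Delta_\infty\phi(x)=\langle D^2\phi(x)\tfrac{x_1^\varepsilon-x}{\varepsilon},\tfrac{x_1^\varepsilon-x}{\varepsilon}\rangle+o(1)$ (with $x_2^\varepsilon$ in place of $x_1^\varepsilon$ when $p(x)\leq2$), multiply by $\varepsilon^2$, and then invoke Lemmas \ref{lem1}, \ref{lem3} and \ref{lem4} to replace $\langle D^2\phi(x)(x_i^\varepsilon-x),(x_i^\varepsilon-x)\rangle$ by $2\,\mathrm{I}+o(\varepsilon^2)$, $\varepsilon^2\Delta\phi(x)$ by $2(N+2)\,\mathrm{II}+o(\varepsilon^2)$ and $\varepsilon^2\langle\nabla p(x),\nabla\phi(x)\rangle$ by $\tfrac{\varepsilon}{2}\,\mathrm{III}+o(\varepsilon^2)$, obtaining $0\leq2(p(x)-2)\,\mathrm{I}+2(N+2)\,\mathrm{II}+\tfrac{\varepsilon}{2}\ln|\nabla\phi(x)|\,\mathrm{III}+o(\varepsilon^2)$. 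Dividing by $2(N+p(x))$, using $\alpha_p(x)+\beta_p(x)=1$ and separating the $\phi(x)$ contributions from $\mathrm{I}$ and $\mathrm{II}$ then produces exactly the viscosity mean value inequality for the expansion in the statement; the companion inequality follows from the supersolution property in the same manner.

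\emph{Main obstacle.} The only genuinely new feature compared with Theorem \ref{thm1} is that the sign of $\alpha_p(x)=\frac{p(x)-2}{N+p(x)}$ is no longer globally fixed: it depends on whether $p(x)>2$ or $p(x)<2$ at the single touching point. Since all the manipulations above are carried out at that fixed point, this is resolved by a pointwise dichotomy --- using \eqref{2-6} and $x_1^\varepsilon$ when $p(x)>2$ and \eqref{2-7} and $x_2^\varepsilon$ when $p(x)<2$, with both admissible when $p(x)=2$ --- exactly as in the cases $1<p\leq2$ of Theorem \ref{thm1}. The remaining point to check is the set $|\nabla\phi(x)|=1$, where $\ln|\nabla\phi(x)|=0$: there the third term of the expansion drops out and the corresponding coefficient in the equation also vanishes, so the equivalence is consistent. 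Beyond this, I expect no difficulty, since the formal expansion of $\dive(|\nabla u|^{p(x)-2}\nabla u)$ has already been performed above and Lemma \ref{lem4} supplies precisely the term needed to absorb $\ln|\nabla\phi(x)|\langle\nabla p(x),\nabla\phi(x)\rangle$.
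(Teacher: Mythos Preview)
Your proposal is correct and follows exactly the approach the paper intends: the paper itself omits the proof of Theorem~\ref{thm3}, stating only that it is obtained ``by the same method as in Theorem~\ref{thm1}'' using Lemma~\ref{lem4}, and your write-up faithfully carries out that program with the natural substitutions $\langle\nabla a,\nabla u\rangle\mapsto\ln|\nabla u|\langle\nabla p,\nabla u\rangle$ and the pointwise reading of $p(x)$, $\alpha_p(x)$, $\beta_p(x)$. Your handling of the sign dichotomy for $\alpha_p(x)$ (choosing between \eqref{2-6}/$x_1^\varepsilon$ and \eqref{2-7}/$x_2^\varepsilon$ according to whether $p(x)>2$ or $p(x)<2$ at the touching point) is precisely what the paper's remark ``by using \eqref{2-7} instead of \eqref{2-6} in Lemma~\ref{lem1} if necessary'' amounts to in this variable-exponent setting.
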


\section{Parabolic case}
\noindent
\par
\label{sec4}

In this section, we start from the definition of viscosity solutions to the normalized double phase parabolic equation, and combine the ideas in \cite{MPR2} to investigate the possible form of the mean value formula. We integrate the terms with $p$ and $q$ over different time intervals, and find that when these two time lags satisfy the viscosity condition, the mean value formula holds. 

We first give the definition of viscosity solutions to Eq.  \eqref{1-6}. The similar definition can be found in  \cite[Definition 1]{MPR2}.

\begin{definition}\label{def3}
A function $u:\Omega_T\rightarrow\mathbb{R}$ is a viscosity solution to \eqref{1-6} if $u$ is continuous and whenever $(x,t)\in \Omega_T$ and $\phi\in C^2(\Omega_T)$ is such that
\begin{itemize}
	\item [(i)] $u(x,t)=\phi(x,t)$.
    \item [(ii)] $u(y,s)>\phi(y,s)$ for all $(y,s)\in \Omega_T,(y,s)\neq(x,t)$,
then we have at the point $(x,t)$
$$\begin{cases}
		\phi_t \geq(p-2)\Delta_{\infty}\phi+\Delta \phi+a\lvert\nabla \phi\vert^{q-p}\left((q-2)\Delta_{\infty}\phi+\Delta \phi\right)\\ \qquad+\lvert\nabla \phi\vert^{q-p}\langle\nabla a,\nabla\phi\rangle
		&\text{ if } \nabla\phi(x,t)\neq0 ,\\
		\phi_t \geq\lambda_{\min}((p-2)D^2\phi)+\Delta\phi&\text{ if } \nabla\phi(x,t)=0.
\end{cases}$$
\end{itemize}
In addition, when the test function $\phi$  touches $u$ from above, all inequalities are reversed and $\lambda_{\min}((p-2)D^2\phi)$ is replaced by $\lambda_{\max}((p-2)D^2\phi)$.
\end{definition}

In fact, the number of test functions $\phi$ can be reduced, if the gradient of a test function $\phi$ vanishes, we can suppose $D^2\phi=0$. Nothing is required if $\nabla\phi=0$ and $D^2\phi\neq0$. We state the following lemma without proof (see  \cite[Lemma 2]{MPR2} for details).

\begin{lemma}
A function $u:\Omega_T\rightarrow\mathbb{R}$ is a viscosity solution to Eq. \eqref{1-6} if $u$ is continuous and whenever $(x,t)\in \Omega_T$ and $\phi\in C^2(\Omega_T)$ is such that
\begin{itemize}
	\item [(i)] $u(x,t)=\phi(x,t)$.
   \item [(ii)] $u(y,s)>\phi(y,s)$ for all $(y,s)\in \Omega_T,(y,s)\neq(x,t)$,
then at the point $(x,t)$, if $\nabla\phi(x,t)\neq0$, we have 
\begin{align}\label{4-1}
\begin{split}
\phi_t &\geq(p-2)\Delta_{\infty}\phi+\Delta \phi+a\lvert\nabla \phi\vert^{q-p}\left((q-2)\Delta_{\infty}\phi+\Delta \phi\right)\\
&\quad+\lvert\nabla \phi\vert^{q-p}\langle\nabla a,\nabla\phi\rangle;
\end{split}
\end{align}
if $\nabla\phi(x,t)=0$ and $D^2\phi(x,t)=0$, we have 
\begin{align}\label{4-2}
\phi_t(x,t)\geq0.
\end{align}
\end{itemize}
In addition, when the test function $\phi$ touches $u$ from above, all inequalities are reversed.
\end{lemma}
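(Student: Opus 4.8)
The plan is to establish the nontrivial implication in the lemma: assuming $u$ is continuous and satisfies the reduced requirements \eqref{4-1}--\eqref{4-2} (together with the reversed inequalities when the test function touches from above), deduce that $u$ is a viscosity solution in the sense of Definition \ref{def3}. The converse is immediate, since a viscosity solution obeys the full list of inequalities in Definition \ref{def3} and hence \emph{a fortiori} the reduced ones. So everything reduces to the following claim: if $\phi\in C^2(\Omega_T)$ is such that $u-\phi$ has a strict minimum at $(x_0,t_0)\in\Omega_T$ with $u(x_0,t_0)=\phi(x_0,t_0)$, $\nabla\phi(x_0,t_0)=0$ and $D^2\phi(x_0,t_0)=:A\neq0$, then $\phi_t(x_0,t_0)\geq\lambda_{\min}((p-2)A)+\Delta\phi(x_0,t_0)$, and symmetrically, with $\lambda_{\max}$, when $u-\phi$ has a strict maximum. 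The case $A=0$ is exactly \eqref{4-2}, which coincides with the requirement of Definition \ref{def3}.

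I would prove the claim by the perturbation device of \cite[Lemma 2]{MPR2}. Fix a small space-time cylinder $Q=\overline{B_r(x_0)}\times[t_0-r,t_0+r]\subset\Omega_T$ on which $u-\phi$ attains its minimum only at $(x_0,t_0)$, so $m:=\min_{\partial Q}(u-\phi)>0$. For a unit vector $e$ and $\varepsilon\in(0,m/(2r))$ set
\begin{align*}
\phi_\varepsilon(x,t):=\phi(x,t)-\varepsilon\langle e,x-x_0\rangle+c_\varepsilon,\qquad c_\varepsilon:=\min_{Q}\big(u-\phi+\varepsilon\langle e,\cdot-x_0\rangle\big)\in[-\varepsilon r,0].
\end{align*}
Then $u-\phi_\varepsilon\geq0$ on $Q$, with equality at a point $(x_\varepsilon,t_\varepsilon)$ which, since $u-\phi+\varepsilon\langle e,\cdot-x_0\rangle>0$ on $\partial Q$, lies in the interior of $Q$; after a routine modification of $\phi_\varepsilon$ away from $(x_\varepsilon,t_\varepsilon)$ (leaving its $2$-jet there unchanged) we may assume $u-\phi_\varepsilon$ has a strict global minimum at $(x_\varepsilon,t_\varepsilon)$. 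Since the perturbation is affine in $x$ with $C^2$-norm tending to $0$ and $u-\phi$ has a strict minimum at $(x_0,t_0)$, the usual compactness argument gives $(x_\varepsilon,t_\varepsilon)\to(x_0,t_0)$ and $c_\varepsilon\to0$ as $\varepsilon\to0$, hence $D^2\phi_\varepsilon(x_\varepsilon,t_\varepsilon)=D^2\phi(x_\varepsilon,t_\varepsilon)\to A$, $\partial_t\phi_\varepsilon(x_\varepsilon,t_\varepsilon)=\partial_t\phi(x_\varepsilon,t_\varepsilon)\to\phi_t(x_0,t_0)$, and $\nabla\phi_\varepsilon(x_\varepsilon,t_\varepsilon)=\nabla\phi(x_\varepsilon,t_\varepsilon)-\varepsilon e\to0$.

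When $\nabla\phi_\varepsilon(x_\varepsilon,t_\varepsilon)\neq0$ we apply \eqref{4-1} at $(x_\varepsilon,t_\varepsilon)$. With $\widehat n_\varepsilon:=\nabla\phi_\varepsilon(x_\varepsilon,t_\varepsilon)/|\nabla\phi_\varepsilon(x_\varepsilon,t_\varepsilon)|$ we have $(p-2)\Delta_\infty\phi_\varepsilon(x_\varepsilon,t_\varepsilon)=(p-2)\langle D^2\phi_\varepsilon(x_\varepsilon,t_\varepsilon)\widehat n_\varepsilon,\widehat n_\varepsilon\rangle\geq\lambda_{\min}\big((p-2)D^2\phi_\varepsilon(x_\varepsilon,t_\varepsilon)\big)$, while $|\nabla\phi_\varepsilon(x_\varepsilon,t_\varepsilon)|\to0$ forces the terms $a\lvert\nabla\phi_\varepsilon\rvert^{q-p}\big((q-2)\Delta_\infty\phi_\varepsilon+\Delta\phi_\varepsilon\big)$ and $\lvert\nabla\phi_\varepsilon\rvert^{q-p}\langle\nabla a,\nabla\phi_\varepsilon\rangle$ at $(x_\varepsilon,t_\varepsilon)$ to vanish in the limit (using $q\geq p$, the continuity of $a$ and $\nabla a$, and the boundedness of $D^2\phi$ near $(x_0,t_0)$; for $q=p$ these terms are bounded and $\langle\nabla a,\nabla\phi_\varepsilon\rangle\to0$). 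Passing to the limit in \eqref{4-1}, and using continuity of the smallest eigenvalue together with $D^2\phi_\varepsilon(x_\varepsilon,t_\varepsilon)\to A$,
\begin{align*}
\phi_t(x_0,t_0)=\lim_{\varepsilon\to0}\partial_t\phi_\varepsilon(x_\varepsilon,t_\varepsilon)\geq\liminf_{\varepsilon\to0}\Big(\lambda_{\min}\big((p-2)D^2\phi_\varepsilon(x_\varepsilon,t_\varepsilon)\big)+\Delta\phi_\varepsilon(x_\varepsilon,t_\varepsilon)\Big)=\lambda_{\min}((p-2)A)+\Delta\phi(x_0,t_0),
\end{align*}
which is the claim. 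The case of a strict maximum is identical with all inequalities reversed and $\lambda_{\min}$ replaced by $\lambda_{\max}$ (now using $(p-2)\langle A\widehat n,\widehat n\rangle\leq\lambda_{\max}((p-2)A)$); combining both cases with the trivial necessity direction yields the lemma.

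The main obstacle is the proviso $\nabla\phi_\varepsilon(x_\varepsilon,t_\varepsilon)\neq0$: a priori the contact point could land where $\nabla\phi(x_\varepsilon,t_\varepsilon)=\varepsilon e$, so that the perturbed gradient vanishes and \eqref{4-1} is unavailable. This is handled exactly as in \cite{MPR2} by exploiting the freedom in the choice of the direction $e$ — letting $e$ range over a countable dense set of unit vectors and, if needed, adding a small higher-order correction to the perturbation — so that for some sequence $\varepsilon_j\to0$ the contact gradients do not vanish while $(x_{\varepsilon_j},t_{\varepsilon_j})\to(x_0,t_0)$ and $\phi_{\varepsilon_j}\to\phi$ in $C^2$ near $(x_0,t_0)$ are retained. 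Once this nondegeneracy is secured, the limit passage above applies verbatim and the proof is complete.
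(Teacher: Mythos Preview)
The paper does not supply its own proof of this lemma; it states the result and refers the reader to \cite[Lemma~2]{MPR2}. Your perturbation argument is precisely that device, so you are filling in exactly what the authors omit, by the route they intend.

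One point deserves care. Your limit passage in the $a$-dependent terms is fully justified only when $q>p$, since then $|\nabla\phi_\varepsilon|^{q-p}\to0$ annihilates the whole $q$-block. For $q=p$ you correctly note that $a\big((p-2)\Delta_\infty\phi_\varepsilon+\Delta\phi_\varepsilon\big)$ is merely bounded, yet you then write the limiting inequality as if it had vanished; lower-bounding $(p-2)\Delta_\infty$ by $\lambda_{\min}$ in both the $p$- and $q$-blocks actually delivers $(1+a(x_0,t_0))\big(\lambda_{\min}((p-2)A)+\Delta\phi(x_0,t_0)\big)$ on the right, which does not imply the inequality in Definition~\ref{def3} when $\lambda_{\min}((p-2)A)+\Delta\phi(x_0,t_0)<0$. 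This is less a defect of your strategy than a symptom of the fact that the critical-point clause in Definition~\ref{def3} is copied verbatim from the pure normalized $p$-Laplacian of \cite{MPR2} and carries no factor $(1+a)$, which one would expect when $q=p$. For the genuine double-phase range $q>p$ your argument is complete; to cover $q=p$ as stated you would need either to adjust Definition~\ref{def3} or to give a separate argument.
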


The definition of $u$ satisfying the asymptotic mean value formula \eqref {1-7} at the point $(x,t)$  in the viscosity sense is similar to Definition \ref{def1}, so we omit it. But it is worth to mentioning that $\nabla \phi(x,t)=0$ is allowed in the parabolic case, which is consistent with Definition \ref{def3}.

Similar to the elliptic case, we also need the following lemmas. The ideas of Lemmas \ref{lem5}--\ref{lem7} come from \cite[Section 3]{MPR2} and the proofs are similar.

\begin{lemma}\label{lem5}
Let $\phi$ be a $C^2$ function in a neighborhood of $(x,t)$, $\varepsilon>0,A(x,t)>0,s\in (t-\frac{\varepsilon^2}{A(x,t)},t)$. Denote by $x^{\varepsilon,s}_1,x^{\varepsilon,s}_2$ points in which $\phi$ attains its minimum and maximum over a ball $\overline{B_\varepsilon(x)}$ at time $s$ respectively. We have
\begin{align}\label{4-5}
\begin{split}
&\quad \dfrac{1}{2}\dashint_{t-\frac{\varepsilon^2}{A(x,t)}}^{t}\left\{\mathop{\rm{max}}\limits_{y\in \overline{B_\varepsilon(x)}}\phi(y,s)+\mathop{\rm{min}}\limits_{y\in \overline{B_\varepsilon(x)}}\phi(y,s)\right\}ds-\phi(x,t)\\
&\geq\frac{1}{2}\dashint_{t-\frac{\varepsilon^2}{A(x,t)}}^{t}\left<D^2\phi(x,t)(x^{\varepsilon,s}_1-x),(x^{\varepsilon,s}_1-x)\right>ds-\frac{\varepsilon^2}{2A(x,t)}\phi_t(x,t)+o(\varepsilon^2)
\end{split}
\end{align}
and 
\begin{align}\label{4-6}
\begin{split}
&\quad \dfrac{1}{2}\dashint_{t-\frac{\varepsilon^2}{A(x,t)}}^{t}\left\{\max\limits_{y\in \overline{B_\varepsilon(x)}}\phi(y,s)+\min\limits_{y\in \overline{B_\varepsilon(x)}}\phi(y,s)\right\}ds-\phi(x,t)\\
&\leq\frac{1}{2}\dashint_{t-\frac{\varepsilon^2}{A(x,t)}}^{t}\left<D^2\phi(x,t)(x^{\varepsilon,s}_2-x),(x^{\varepsilon,s}_2-x)\right>ds-\frac{\varepsilon^2}{2A(x,t)}\phi_t(x,t)+o(\varepsilon^2).
\end{split}
\end{align}
\end{lemma}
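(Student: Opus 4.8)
The plan is to reduce each time slice $s \in (t - \varepsilon^2/A(x,t), t)$ to the elliptic estimate of Lemma \ref{lem1}, and then to control the error coming from the time variable by a Taylor expansion in $t$. First I would fix $s$ in the given interval and apply Lemma \ref{lem1} to the function $y \mapsto \phi(y,s)$ in a neighborhood of $x$: this gives
\begin{align*}
-\phi(x,s) + \frac{1}{2}\left\{\max_{y\in\overline{B_\varepsilon(x)}}\phi(y,s) + \min_{y\in\overline{B_\varepsilon(x)}}\phi(y,s)\right\} \geq \frac{1}{2}\left\langle D^2_x\phi(x,s)(x_1^{\varepsilon,s}-x), (x_1^{\varepsilon,s}-x)\right\rangle + o(\varepsilon^2),
\end{align*}
where the error term $o(\varepsilon^2)$ is uniform in $s$ over the (shrinking) time interval by the $C^2$ regularity of $\phi$. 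The next step is to replace $\phi(x,s)$ and $D^2_x\phi(x,s)$ by their values at the base point $(x,t)$. Since $|t-s| \leq \varepsilon^2/A(x,t) = O(\varepsilon^2)$, a first-order Taylor expansion in time yields $\phi(x,s) = \phi(x,t) + \phi_t(x,t)(s-t) + o(\varepsilon^2)$, while $D^2_x\phi(x,s) = D^2_x\phi(x,t) + o(1)$ because $D^2_x\phi$ is continuous; the latter error, multiplied by $|x_1^{\varepsilon,s}-x|^2 = O(\varepsilon^2)$, contributes only $o(\varepsilon^2)$.

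Then I would average over $s \in (t-\varepsilon^2/A(x,t), t)$ using the normalized integral $\dashint$. The term $\phi(x,t)$ is constant and averages to itself; the linear term $\phi_t(x,t)(s-t)$ averages to $-\tfrac{\varepsilon^2}{2A(x,t)}\phi_t(x,t)$ since $\dashint_{t-h}^{t}(s-t)\,ds = -h/2$ with $h = \varepsilon^2/A(x,t)$; the $D^2_x\phi(x,t)$ quadratic form integrates as written in \eqref{4-5}; and all the $o(\varepsilon^2)$ errors, being uniform in $s$, survive the averaging as $o(\varepsilon^2)$. Collecting these contributions gives precisely \eqref{4-5}. For \eqref{4-6} the argument is identical, using the upper estimate \eqref{2-7} from Lemma \ref{lem1} with the maximum point $x_2^{\varepsilon,s}$ in place of the lower estimate.

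I expect the only delicate point to be the claim that the error terms from Lemma \ref{lem1} are \emph{uniform} in $s$ as $s \to t$ — that is, that the $o(\varepsilon^2)$ does not degrade when $s$ is itself allowed to move with $\varepsilon$. This follows because the proof of Lemma \ref{lem1} only uses the second-order Taylor expansion of $\phi$ at the relevant point together with bounds on $D^2\phi$ on a fixed compact neighborhood of $(x,t)$, and these bounds are uniform; the continuity of $D^2\phi$ then upgrades this to a genuine $o(\varepsilon^2)$ uniformly over the interval $(t-\varepsilon^2/A(x,t),t)$. Everything else is the routine bookkeeping of averaging a Taylor expansion, so I would present that part briefly.
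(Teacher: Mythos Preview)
Your proposal is correct and follows essentially the same route the paper intends: the paper does not write out a proof of this lemma but refers to \cite[Section~3]{MPR2}, where the argument is precisely to apply the elliptic estimate of Lemma~\ref{lem1} at each fixed time slice $s$, Taylor-expand $\phi(x,s)$ and $D_x^2\phi(x,s)$ about $(x,t)$, and then average in $s$. Your attention to the uniformity in $s$ of the $o(\varepsilon^2)$ remainder is the only point requiring care, and your justification via the continuity of $D^2\phi$ on a compact neighborhood is the standard one.
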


\begin{lemma}\label{lem6}
Let $\phi$ be a $C^2$ function in a neighborhood of $(x,t)$ with $\nabla\phi(x,t)\neq0$. We have 
\begin{align}\label{4-7}
\lim\limits_{\varepsilon\rightarrow0+}\frac{x^{\varepsilon,s}_1-x}{\varepsilon}=-\dfrac{\nabla\phi}{\lvert\nabla\phi\rvert}(x,t),
\end{align}
where $x^{\varepsilon,s}_1$ is defined as in Lemma \ref{lem5}.
\end{lemma}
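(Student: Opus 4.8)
The plan is to reduce the parabolic statement to the elliptic Lemma \ref{lem2} by freezing the time variable. Fix $s \in (t - \tfrac{\varepsilon^2}{A(x,t)}, t)$ and write $\psi_s(y) := \phi(y,s)$. Since $\phi \in C^2$ in a neighborhood of $(x,t)$ and $\nabla\phi(x,t)\neq 0$, for $\varepsilon$ small enough and for all $s$ in that shrinking time interval we have $\nabla\psi_s \neq 0$ on $\overline{B_\varepsilon(x)}$, so the minimum point $x_1^{\varepsilon,s}$ of $\psi_s$ over $\overline{B_\varepsilon(x)}$ is well defined. First I would record the elementary fact that $x_1^{\varepsilon,s} \to x$ as $\varepsilon \to 0$, uniformly in $s$ in the interval; this follows because $|x_1^{\varepsilon,s}-x|\le\varepsilon$ by definition. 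Then I would run the first-order optimality/Taylor argument exactly as in the proof of Lemma \ref{lem2}: writing $x_1^{\varepsilon,s} = x + \varepsilon v^{\varepsilon,s}$ with $|v^{\varepsilon,s}|\le 1$, the minimality of $\phi(\cdot,s)$ over the ball together with a Taylor expansion of $\phi(\cdot,s)$ at $x$ forces $v^{\varepsilon,s}$ to converge to the unit vector pointing in the direction of steepest descent of $\psi_s$ at $x$, namely $-\nabla\psi_s(x)/|\nabla\psi_s(x)|$.

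The one point that genuinely uses the parabolic (rather than purely elliptic) structure is that the direction must be evaluated at $(x,t)$, not at $(x,s)$. Here I would invoke the continuity of $\nabla\phi$: since $s \in (t-\tfrac{\varepsilon^2}{A(x,t)},t)$, we have $|s-t| \le \tfrac{\varepsilon^2}{A(x,t)} \to 0$ as $\varepsilon\to 0$, so $\nabla\phi(x,s) \to \nabla\phi(x,t)$ and, because $\nabla\phi(x,t)\neq 0$, also $\nabla\phi(x,s)/|\nabla\phi(x,s)| \to \nabla\phi(x,t)/|\nabla\phi(x,t)|$. Combining this with the convergence $v^{\varepsilon,s} \to -\nabla\phi(x,s)/|\nabla\phi(x,s)|$ from the previous step (made uniform in $s$ by the $C^2$ bounds on $\phi$ near $(x,t)$) yields
\begin{align*}
\lim_{\varepsilon\to 0+}\frac{x_1^{\varepsilon,s}-x}{\varepsilon} = -\frac{\nabla\phi}{|\nabla\phi|}(x,t),
\end{align*}
which is the claim.

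The main obstacle, such as it is, is bookkeeping the uniformity in $s$: one must make sure the Taylor remainders and the lower bound on $|\nabla\phi|$ near $(x,t)$ are controlled by constants independent of $s$ in the interval $(t-\tfrac{\varepsilon^2}{A(x,t)},t)$, so that the two limits (in $\varepsilon$ and the effect of $s\to t$) can be taken together rather than iterated. This is immediate from the fact that $\phi\in C^2$ on a fixed neighborhood of $(x,t)$ and $\nabla\phi(x,t)\neq 0$, so $\|D^2\phi\|$ is bounded and $|\nabla\phi|$ is bounded below by a positive constant on a slightly smaller neighborhood, which contains $\overline{B_\varepsilon(x)}\times[t-\tfrac{\varepsilon^2}{A(x,t)},t]$ for all small $\varepsilon$. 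Since this is entirely parallel to the proof of Lemma \ref{lem2} and to the corresponding argument in \cite[Section 3]{MPR2}, I would state it briefly and refer the reader there for the routine details.
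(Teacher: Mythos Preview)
Your proposal is correct and follows exactly the approach the paper has in mind: the paper does not give an independent proof of this lemma but simply states that it is the parabolic analogue of Lemma~\ref{lem2} with proof as in \cite[Section~3]{MPR2}, which is precisely the freeze-time-and-use-continuity-of-$\nabla\phi$ argument you outline. There is nothing to add.
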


\begin{lemma}\label{lem7}
Let $\phi$ be a $C^2$ function in a neighborhood of $(x,t)$, $s$ and $A(x,t)$ are defined as in Lemma \ref{lem5}. Then
\begin{align*}
\dashint_{t-\frac{\varepsilon^2}{A(x,t)}}^t\dashint_{B_\varepsilon(x)}\phi(y,s)dyds-\phi(x,t)=\dfrac{\varepsilon^2}{2(N+2)}\Delta\phi(x,t)-\dfrac{\varepsilon^2}{2A(x,t)}\phi_t(x,t)+o(\varepsilon^2).
\end{align*}
\end{lemma}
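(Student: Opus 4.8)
The plan is to reduce this parabolic mean value expansion to the purely spatial one in Lemma~\ref{lem3} together with a first-order Taylor expansion in the time variable, exploiting the fact that the time interval $\left(t-\frac{\varepsilon^2}{A(x,t)},t\right)$ has length $\frac{\varepsilon^2}{A(x,t)}=O(\varepsilon^2)$, so that any quantity which is $o(1)$ as $s\to t$ contributes only $o(\varepsilon^2)$ after averaging against the interval.

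First I would freeze the time level $s\in\left(t-\frac{\varepsilon^2}{A(x,t)},t\right)$ and apply Lemma~\ref{lem3} to the $C^2$ function $y\mapsto\phi(y,s)$ at the point $x$, obtaining
\begin{align*}
\dashint_{B_\varepsilon(x)}\phi(y,s)\,dy=\phi(x,s)+\frac{\varepsilon^2}{2(N+2)}\Delta\phi(x,s)+o(\varepsilon^2),
\end{align*}
where $\Delta$ denotes the Laplacian in the space variable. The key point is that, since $\phi\in C^2$ in a neighborhood of $(x,t)$, its second-order spatial derivatives are uniformly continuous on a small closed space-time cylinder around $(x,t)$; hence the error $o(\varepsilon^2)$ above can be taken uniform in $s$ for all $s$ in the shrinking interval, which is what makes the subsequent averaging legitimate.

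Next I would average this identity in $s$ over $\left(t-\frac{\varepsilon^2}{A(x,t)},t\right)$. For the term $\phi(x,s)$, Taylor's theorem in $s$ about $t$ gives $\phi(x,s)=\phi(x,t)+\phi_t(x,t)(s-t)+o(|s-t|)$; since $|s-t|\le\frac{\varepsilon^2}{A(x,t)}$ on the interval, the remainder is $o(\varepsilon^2)$ uniformly, and the elementary computation $\dashint_{t-\varepsilon^2/A(x,t)}^{\,t}(s-t)\,ds=-\frac{\varepsilon^2}{2A(x,t)}$ yields
\begin{align*}
\dashint_{t-\frac{\varepsilon^2}{A(x,t)}}^{\,t}\phi(x,s)\,ds=\phi(x,t)-\frac{\varepsilon^2}{2A(x,t)}\phi_t(x,t)+o(\varepsilon^2).
\end{align*}
For the term $\frac{\varepsilon^2}{2(N+2)}\Delta\phi(x,s)$, continuity of $\Delta\phi$ at $(x,t)$ gives $\Delta\phi(x,s)=\Delta\phi(x,t)+o(1)$ as $s\to t$, so its average equals $\frac{\varepsilon^2}{2(N+2)}\Delta\phi(x,t)+o(\varepsilon^2)$; and the uniform $o(\varepsilon^2)$ remainder from the first step averages to $o(\varepsilon^2)$. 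Collecting the three contributions and subtracting $\phi(x,t)$ gives exactly
\begin{align*}
\dashint_{t-\frac{\varepsilon^2}{A(x,t)}}^{\,t}\dashint_{B_\varepsilon(x)}\phi(y,s)\,dy\,ds-\phi(x,t)=\frac{\varepsilon^2}{2(N+2)}\Delta\phi(x,t)-\frac{\varepsilon^2}{2A(x,t)}\phi_t(x,t)+o(\varepsilon^2).
\end{align*}

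The argument is essentially bookkeeping, and the only delicate point — the main (if mild) obstacle — is the uniformity in $s$ of the error term coming from Lemma~\ref{lem3} over the time slice, which must be verified so that the remainder can be integrated in $s$; it follows from the $C^2$ regularity of $\phi$ near $(x,t)$. Everything else reduces to a one-variable Taylor expansion and the computation of $\dashint(s-t)\,ds$. This is the parabolic counterpart of Lemma~\ref{lem3} and follows the lines of \cite[Section~3]{MPR2}.
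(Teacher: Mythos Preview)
Your proposal is correct and follows exactly the approach the paper has in mind: the paper does not spell out a proof of this lemma but simply states that Lemmas~\ref{lem5}--\ref{lem7} are proved as in \cite[Section~3]{MPR2}, and your argument---freezing $s$, invoking Lemma~\ref{lem3} with a uniform $o(\varepsilon^2)$ remainder, then averaging in time via a first-order Taylor expansion and the identity $\dashint_{t-\varepsilon^2/A}^{t}(s-t)\,ds=-\tfrac{\varepsilon^2}{2A}$---is precisely that computation. The only point worth checking, which you correctly flag, is the uniformity in $s$ of the spatial remainder; this is guaranteed by the $C^2$ regularity of $\phi$ near $(x,t)$.
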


\begin{lemma}\label{lem8}
Let $\phi$ be a $C^2$ function in a neighborhood of $(x,t)$. We have 
\begin{align*}
\dashint_{B_\varepsilon(x)}\phi(y+\varepsilon\nabla a(x,t),t)-\phi(y-\varepsilon\nabla a(x,t),t)dy=2\varepsilon\left<\nabla\phi(x,t),\nabla a(x,t)\right>+o(\varepsilon^2).
\end{align*}
\end{lemma}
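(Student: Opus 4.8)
The plan is to follow verbatim the argument of Lemma~\ref{lem4}: the time variable $t$ is frozen throughout, so the quantity in question is a purely spatial second-order expansion and nothing parabolic enters. In particular the hypothesis that $a$ is merely $C^1$ in space is already enough, since $\nabla a(x,t)$ will appear only as a fixed vector.

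First I would rescale via $y=x+\varepsilon z$, $z\in B(0,1)$, to rewrite
\[
\dashint_{B_\varepsilon(x)}\phi\big(y\pm\varepsilon\nabla a(x,t),t\big)\,dy=\dashint_{B(0,1)}\phi\big(x+\varepsilon(z\pm\nabla a(x,t)),t\big)\,dz .
\]
Next I would Taylor expand $\phi(\cdot,t)$ about $x$ to second order, using the continuity of $D^2\phi$ near $(x,t)$ to make the remainder $o(\varepsilon^2)$ uniformly in $z\in B(0,1)$. Integrating over $B(0,1)$, the term linear in $z$ and the mixed term $\langle D^2\phi(x,t)\nabla a(x,t),z\rangle$ both drop out by oddness, which yields
\[
\dashint_{B_\varepsilon(x)}\phi\big(y\pm\varepsilon\nabla a(x,t),t\big)\,dy=\phi(x,t)\pm\varepsilon\langle\nabla\phi(x,t),\nabla a(x,t)\rangle+\tfrac{\varepsilon^2}{2}\dashint_{B(0,1)}\!\big(\langle D^2\phi(x,t)\nabla a(x,t),\nabla a(x,t)\rangle+\langle D^2\phi(x,t)z,z\rangle\big)\,dz+o(\varepsilon^2).
\]
Finally, subtracting the expansion with the $-$ sign from the one with the $+$ sign, the constant term $\phi(x,t)$ and the entire $\varepsilon^2$-block (which is even in $\nabla a(x,t)$) cancel, leaving
\[
\dashint_{B_\varepsilon(x)}\phi\big(y+\varepsilon\nabla a(x,t),t\big)-\phi\big(y-\varepsilon\nabla a(x,t),t\big)\,dy=2\varepsilon\langle\nabla\phi(x,t),\nabla a(x,t)\rangle+o(\varepsilon^2),
\]
which is the claim.

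I do not anticipate any genuine obstacle here. The only point deserving a word of care is the uniformity of the Taylor remainder over the unit ball, and this is immediate from $\phi\in C^2$ in a neighborhood of $(x,t)$; no use of the time derivative of $\phi$, nor any regularity of $a$ beyond $C^1$ in $x$, is needed.
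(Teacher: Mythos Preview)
Your proposal is correct and is precisely the approach the paper intends: Lemma~\ref{lem8} is stated without proof because it is the verbatim spatial argument of Lemma~\ref{lem4} with the time variable frozen, and your write-up reproduces that computation (rescale $y=x+\varepsilon z$, Taylor expand to second order, cancel the even-in-$\nabla a$ terms upon subtraction). There is nothing to add.
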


\medskip

Now we are ready to prove the second main result.

\begin{proof}[Proof of Theorem \ref{thm2}]

We first prove the sufficiency. If $u$ satisfies the asymptotic mean value formula \ref{1-7} in the viscosity sense, we need to prove that $u$ is a viscosity solution. If so, for a test function $\phi$, we have
\begin{align*}
	0&\geq -\phi(x,t)+\dfrac{1}{1+M_\phi(x,t)}\left(\dfrac{\alpha_p}{2}\dashint_{t-\frac{\varepsilon^2}{A_{\phi}(x,t)}}^{t}\left\{\max\limits_{y\in \overline{B_\varepsilon(x)}}\phi(y,s)+\min\limits_{y\in \overline{B_\varepsilon(x)}}\phi(y,s)\right\}ds\right.\\
	&\quad\left.+\beta_p\dashint_{t-\frac{\varepsilon^2}{A_{\phi}(x,t)}}^t\dashint_{B_\varepsilon(x)}\phi(y,s)dyds \right)\\
	&\quad+\dfrac{M_\phi(x,t)}{1+M_\phi(x,t)}\left(\dfrac{\alpha_q}{2}\dashint_{t-\frac{\varepsilon^2}{B_{\phi}(x,t)}}^{t}\left\{\max\limits_{y\in \overline{B_\varepsilon(x)}}\phi(y,s)+\min\limits_{y\in \overline{B_\varepsilon(x)}}\phi(y,s)\right\}ds\right.\\
	&\quad\left.+\beta_q\dashint_{t-\frac{\varepsilon^2}{B_{\phi}(x,t)}}^t\dashint_{B_\varepsilon(x)}\phi(y,s)dyds \right)\\
	&\quad+\frac{\varepsilon\lvert\nabla \phi(x,t) \rvert^{q-p}}{4(N+p)(1+M_\phi(x,t))}\dashint_{B_\varepsilon(x)}\phi(y+\varepsilon\nabla a(x,t),t)-\phi(y-\varepsilon\nabla a(x,t),t)dy\\
	&\quad+o(\varepsilon^2).
\end{align*}

By the non-negativity of $M_\phi(x,t)$ and splitting $\phi(x,t)$, we get
\begin{align}\label{4-4}
	0&\geq \alpha_p\left(\dfrac{1}{2}\dashint_{t-\frac{\varepsilon^2}{A_{\phi}(x,t)}}^{t}\left\{\max\limits_{y\in \overline{B_\varepsilon(x)}}\phi(y,s)+\min\limits_{y\in \overline{B_\varepsilon(x)}}\phi(y,s)\right\}ds-\phi(x,t)\right) \nonumber \\
	&\quad+\beta_p\left(\dashint_{t-\frac{\varepsilon^2}{A_{\phi}(x,t)}}^t\dashint_{B_\varepsilon(x)}\phi(y,s)dyds-\phi(x,t) \right)\nonumber \\
	&\quad+M_\phi(x,t)\left[\alpha_q\left(\dfrac{1}{2}\dashint_{t-\frac{\varepsilon^2}{B_{\phi}(x,t)}}^{t}\left\{\max\limits_{y\in \overline{B_\varepsilon(x)}}\phi(y,s)+\min\limits_{y\in \overline{B_\varepsilon(x)}}\phi(y,s)\right\}ds-\phi(x,t)\right)\right.\nonumber \\
	&\quad\left.+\beta_q\left(\dashint_{t-\frac{\varepsilon^2}{B_{\phi}(x,t)}}^t\dashint_{B_\varepsilon(x)}\phi(y,s)dyds -\phi(x,t)\right)\right]\nonumber \\
	&\quad+\frac{\varepsilon\lvert\nabla \phi(x,t) \rvert^{q-p}}{4(N+p)}\dashint_{B_\varepsilon(x)}\phi(y+\varepsilon\nabla a(x,t),t)-\phi(y-\varepsilon\nabla a(x,t),t)dy+o(\varepsilon^2),
\end{align}
where $\alpha_p, \alpha_q, \beta_p, \beta_q, M_{\phi}(x,t), A_{\phi}(x,t), B_{\phi}(x,t)$ are determined by \eqref{1-8}.

Assume that $p>2$, where $\alpha_p, \alpha_q>0$. For inequality \eqref{4-4}, we apply Lemmas \ref{lem5}, \ref{lem7} and Lemma \ref{lem8} to have
\begin{align*}
	0&\geq\frac{\alpha_p}{2}\dashint_{t-\frac{\varepsilon^2}{A_{\phi}(x,t)}}^{t}\left<D^2\phi(x,t)(x^{\varepsilon,s}_1-x),(x^{\varepsilon,s}_1-x)\right>ds+\dfrac{\varepsilon^2\beta_p}{2(N+2)}\Delta\phi(x,t)\\
	&\quad-\dfrac{\varepsilon^2}{2A_{\phi}(x,t)}\phi_t(x,t)+M_\phi(x,t)\Bigg(\frac{\alpha_q}{2}\dashint_{t-\frac{\varepsilon^2}{B_{\phi}(x,t)}}^{t}\left<D^2\phi(x,t)(x^{\varepsilon,s}_1-x),(x^{\varepsilon,s}_1-x)\right>ds\\
	&\quad+\dfrac{\varepsilon^2\beta_q}{2(N+2)}\Delta\phi(x,t)-\dfrac{\varepsilon^2}{2B_{\phi}(x,t)}\phi_t(x,t) \Bigg)+\dfrac{\varepsilon^2\lvert\nabla \phi(x,t) \rvert^{q-p}}{2(N+p)}\left<\nabla\phi(x,t),\nabla a(x,t)\right>\\
	&\quad+o(\varepsilon^2).
\end{align*}
When $\nabla\phi(x,t)\neq0$, multiplying by $\dfrac{2}{\varepsilon^2}$ and taking the limit as $\varepsilon\rightarrow 0$ on the inequality above, by Lemma \ref{lem6}, we have 
\begin{align*}
	0&\geq\alpha_p\Delta_{\infty}\phi(x,t)+\dfrac{\beta_p}{N+2}\Delta\phi(x,t)-\frac{1}{A_{\phi}(x,t)}\phi_t(x,t)\\
	&\quad+M_\phi(x,t)\left(\alpha_q\Delta_{\infty}\phi(x,t)+\dfrac{\beta_q}{N+2}\Delta\phi(x,t)-\frac{1}{B_{\phi}(x,t)}\phi_t(x,t)\right)\\
	&\quad+\dfrac{\lvert\nabla \phi(x,t) \rvert^{q-p}}{N+p}\left<\nabla\phi(x,t),\nabla a(x,t)\right>.
\end{align*}
Multipling by $N+p$ again, we get
\begin{align*}
	0&\geq(p-2)\Delta_{\infty}\phi(x,t)+\Delta\phi(x,t)+a(x,t)\lvert\nabla \phi(x,t) \rvert^{q-p}((q-2)\Delta_{\infty}\phi(x,t)\\
	&\quad+\Delta\phi(x,t))-\left(\dfrac{N+p}{A_{\phi}(x,t)}+\dfrac{a(x,t)(N+q)\lvert\nabla \phi(x,t) \rvert^{q-p}}{B_{\phi}(x,t)}\right)\phi_t(x,t)\\
	&\quad+\lvert\nabla \phi(x,t) \rvert^{q-p}\left<\nabla\phi(x,t),\nabla a(x,t)\right>.
\end{align*}
Recalling \eqref{1-8}, we have
\begin{align}\label{4-8}
	\dfrac{N+p}{A_{\phi}(x,t)}+\dfrac{a(x,t)(N+q)\lvert\nabla \phi(x,t) \rvert^{q-p}}{B_{\phi}(x,t)}=1.
\end{align} 

Therefore, we obtain
$$\phi_t \geq(p-2)\Delta_{\infty}\phi+\Delta \phi+a\lvert\nabla \phi\vert^{q-p}\left((q-2)\Delta_{\infty}\phi+\Delta \phi\right)
+\lvert\nabla \phi\vert^{q-p}\langle\nabla a,\nabla\phi\rangle.$$
It follows  that \eqref{4-1} holds when $\nabla\phi(x,t)\neq0$. When $\nabla\phi(x,t)=0$ and $D^2\phi(x,t)=0$, by \eqref{4-8}, we get $A_{\phi}(x,t)=N+p$ and $M_{\phi}(x,t)=0$. According to the asymptotic mean value formula, we have  
\begin{align*}
0&\geq -\phi(x,t)+\frac{\alpha_p}{2}\dashint_{t-\frac{\varepsilon^2}{N+p}}^{t}\left\{\max\limits_{y\in \overline{B_\varepsilon(x)}}\phi(y,s)+\min\limits_{y\in \overline{B_\varepsilon(x)}}\phi(y,s)\right\}ds\\
&\quad+\beta_p\dashint_{t-\frac{\varepsilon^2}{N+p}}^t\dashint_{B_\varepsilon(x)}\phi(y,s) dyds+o(\varepsilon^2).
\end{align*}
By Lemma \ref{lem7} and the expansion 
$$\phi(y,s)-\phi(x,t)=\phi_t(x,t)(s-t)+o(|s-t|+|y-x|^2),$$
we have
\begin{align*}
0&\geq \alpha_p\left(\frac{1}{2}\dashint_{t-\frac{\varepsilon^2}{N+p}}^{t}\left\{\max\limits_{y\in \overline{B_\varepsilon(x)}}\phi(y,s)+\min\limits_{y\in\overline{B_\varepsilon(x)}}\phi(y,s)\right\}ds-\phi(x,t)\right)\\
&\quad-\frac{\varepsilon^2\beta_p}{2(N+p)}\phi_t(x,t)+o(\varepsilon^2)\\
&=\frac{\alpha_p}{2}\dashint_{t-\frac{\varepsilon^2}{N+p}}^t\left\{ \max\limits_{y\in \overline{B_\varepsilon(x)}}(\phi(y,s)-\phi(x,t))+\min\limits_{y\in \overline{B_\varepsilon(x)}}(\phi(y,s)-\phi(x,t))\right\}ds\\
&\quad-\dfrac{\varepsilon^2\beta_p}{2(N+p)}\phi_t(x,t)+o(\varepsilon^2)\\
&=\alpha_p\dashint_{t-\frac{\varepsilon^2}{N+p}}^t\phi_t(x,t)(s-t)ds-\frac{\varepsilon^2\beta_p}{2(N+p)}\phi_t(x,t)+o(\varepsilon^2)\\
&=-\frac{\varepsilon^2\alpha_p}{2(N+p)}\phi_t(x,t)-\frac{\varepsilon^2\beta_p}{2(N+p)}\phi_t(x,t)+o(\varepsilon^2)\\
&=-\frac{\varepsilon^2}{2(N+p)}\phi_t(x,t)+o(\varepsilon^2).
\end{align*}
Dividing by $\varepsilon^2$ and taking the limit as $\varepsilon\rightarrow0$, we have$$\phi_t(x,t)\geq0.$$ Thus, we prove that $u$ is a viscosity supersolution. We can use the same method to prove that $u$ is a viscosity subsolution.
 
For the necessity and other cases, since it is similar to the proof of elliptic case, so we omit it. The proof is complete.
\end{proof}

In particular, we consider the case that $a\equiv 0$. For this case,  it follows from \eqref{1-8}  that $A_u(x,t)=N+p$. Then the following corollary holds.

\begin{corollary} [Normalized parabolic $p$-Laplace equation]\label{cor3}
Let $1<p<\infty$ and $u(x,t)$ be a continuous function in a domain $\Omega_T$. The equation
$$u_t=\lvert\nabla u \rvert ^{2-p}\text{\rm{div}}(\lvert \nabla u \rvert^{p-2}\nabla u) \quad \text{in }\Omega_T$$
holds in the viscosity sense if and only if the asymptotic expansion 
\begin{align*}
u(x,t)&=\dfrac{\alpha_p}{2}\dashint_{t-\frac{\varepsilon^2}{N+p}}^{t}\left\{\mathop{\rm{max}}\limits_{y\in\overline{B_\varepsilon(x)}}u(y,s)+\mathop{\rm{min}}\limits_{y\in\overline{B_\varepsilon(x)}}u(y,s)\right\}ds\\
&\quad+\beta_p\dashint_{t-\frac{\varepsilon^2}{N+p}}^t\dashint_{B_\varepsilon(x)}u(y,s)dyds+o(\varepsilon^2) \quad \text{as } \varepsilon\rightarrow0
\end{align*}
holds for all $(x,t)\in \Omega_T$ in the viscosity sense. Here $\alpha_p+\beta_p=1, \frac{\alpha_p}{\beta_p}=\frac{p-2}{N+2}.$
\end{corollary}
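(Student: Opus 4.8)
The plan is to derive this corollary directly from Theorem~\ref{thm2} by specializing to the case $a(x,t)\equiv 0$. First I would record what this choice does to the constants in \eqref{1-8}: since $a\equiv 0$, the quantity $M_u(x,t)=a(x,t)\frac{N+q}{N+p}\lvert\nabla u(x,t)\rvert^{q-p}$ vanishes identically, and the viscosity condition $\frac{N+p}{A_u(x,t)}+\frac{a(x,t)(N+q)\lvert\nabla u(x,t)\rvert^{q-p}}{B_u(x,t)}=1$ collapses to $\frac{N+p}{A_u(x,t)}=1$, hence $A_u(x,t)=N+p$; the value of $B_u(x,t)$ plays no role, since it enters \eqref{1-7} only through the factor $M_u(x,t)=0$.

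Second, I would substitute these values into the expansion \eqref{1-7}. The whole block multiplied by $\frac{M_u(x,t)}{1+M_u(x,t)}$ disappears, the prefactor $\frac{1}{1+M_u(x,t)}$ equals $1$, and the last term $\frac{\varepsilon\lvert\nabla u(x,t)\rvert^{q-p}}{4(N+p)(1+M_u(x,t))}\dashint_{B_\varepsilon(x)}u(y+\varepsilon\nabla a(x,t),t)-u(y-\varepsilon\nabla a(x,t),t)\,dy$ vanishes because $\nabla a\equiv 0$ makes the integrand identically zero. What survives is precisely the asymptotic formula in the statement, with the time average taken over $(t-\frac{\varepsilon^2}{N+p},t)$.

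Third, I would verify that the two notions of viscosity solution agree: the PDE in the corollary is \eqref{1-6} with $a\equiv 0$, and Definition~\ref{def3} in that case is exactly the standard definition of a viscosity solution of the normalized parabolic $p$-Laplacian — when $\nabla\phi(x,t)\neq 0$ the inequality reads $\phi_t\geq(p-2)\Delta_{\infty}\phi+\Delta\phi$, and when $\nabla\phi(x,t)=0$ it is the usual $\lambda_{\min}$/$\lambda_{\max}$ condition. Therefore Theorem~\ref{thm2} applies verbatim and yields the stated equivalence.

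I do not expect a genuine obstacle here, as the corollary is a clean specialization of Theorem~\ref{thm2}; the one place deserving a word of care is the degenerate case $\nabla\phi(x,t)=0$, where one must check that $A_u(x,t)=N+p$ and $M_u(x,t)=0$ are consistent with the reduced mean value identity and with the requirement $\phi_t(x,t)\geq 0$ — but this is exactly the computation already carried out at the end of the proof of Theorem~\ref{thm2}.
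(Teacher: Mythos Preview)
Your proposal is correct and matches the paper's approach exactly: the paper derives the corollary simply by taking $a\equiv 0$ in Theorem~\ref{thm2}, observing from \eqref{1-8} that $A_u(x,t)=N+p$ (and implicitly that $M_u\equiv 0$ kills the remaining terms). Your write-up is more detailed than the paper's one-line justification, but the underlying argument is identical.
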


\begin{remark}
Corollary \ref{cor3} is the main result in \cite{MPR2}. In fact, Corollary \ref{cor3} also holds for $p=\infty$  with $\alpha_p=1$, $\beta_p=0$.
\end{remark}

\section*{Acknowledgment}
This work was supported by the National Natural Science Foundation of China (No. 12071098) and the Fundamental Research Funds for the Central Universities (No. 2022FRFK060022).

\end{document}